\newtheorem{theorem}{Theorem}[section]
\newtheorem{corollary}[theorem]{Corollary}
\newtheorem{lemma}[theorem]{Lemma}
\theoremstyle{definition}
\newtheorem{assumption}{Assumption}[section]
\newtheorem{remark}[theorem]{Remark}
\numberwithin{equation}{section}
\begin{document}
\makeatletter

\begin{center}
%%%% Performance Optimization of a Dynamic Channel Bonding Strategy in Cognitive Radio Networks
\large{\bf Splitting Method for Support Vector Machine in Reproducing Kernel Banach Space with Lower Semi-continuous Loss Function}
\end{center}\vspace{5mm}

\begin{center}
\textsc{Mingyu Mo, Yimin Wei, Qi $\text{\normalfont{Ye}}^{*}$}
\end{center}

\vspace{2mm}

\footnotesize{
\noindent\begin{minipage}{14cm}
{\bf Abstract:}
   In this paper, we use the splitting method to solve support vector machine in reproducing kernel Banach space with lower semi-continuous loss function. We equivalently transfer support vector machines in reproducing kernel Banach space with lower semi-continuous loss function to a finite-dimensional tensor Optimization and propose the splitting method based on alternating direction method of multipliers. By Kurdyka-Lojasiewicz inequality, the iterative sequence obtained by this splitting method is globally convergent to a stationary point if the loss function is lower semi-continuous and subanalytic. Finally, several numerical performances demonstrate the effectiveness.
\end{minipage}
 \\[5mm]

\noindent{\bf Keywords:} {support vector machine, lower semi-continuous loss function, reproducing kernel Banach space, tensor optimization, splitting method.}\\

\noindent{\bf Mathematics Subject Classification:} {Primary: 68Q32, 68T05; Secondary: 46E22, 68P01.}

\hbox to14cm{\hrulefill}\par

% Optional PDF information
%\ifpdf
%\hypersetup{
%  pdftitle={Spiltting Method for the SVM in RKBS with l.s.c Loss Function},
%  pdfauthor={Mingyu Mo, Yimin Wei, Qi Ye$}
%}
%\fi

% The next statement enables references to information in the
% supplement. See the xr-hyperref package for details.

% FundRef data to be entered by SIAM
%<funding-group specific-use="FundRef">
%<award-group>
%<funding-source>
%<named-content content-type="funder-name"> 
%</named-content> 
%<named-content content-type="funder-identifier"> 
%</named-content>
%</funding-source>
%<award-id> </award-id>
%</award-group>
%</funding-group>

  \section{Introduction}
   \label{sec:1}
    Support vector machine (SVM) is a successful model in machine learning. The basic idea of SVM is to find a functional in a kernel-based function space that achieves the smallest regularized possible empirical risk and build the SVM by this functional. The SVM is already achieved in some reproducing kernel Hilbert spaces (RKHS) with convex loss function (See \cite{MR2450103}). Recently, paper \cite{Mo2022} discusses the splitting method for the SVM in RKHS with lower semi-continuous loss function. After the success of the SVM in RKHS, people begin to discuss the SVM in reproducing kernel Banach space (RKBS) because RKHS is a special case of RKBS (See \cite{Huang2019,Xu2019}), and paper \cite{Lin2022} proposes a homotopy method for multilinear system induced from RKBS, but the algorithms for the SVM in RKBS are still lack of study. Currently, people are most interested in the infinite-dimensional spaces for applications of machine learning such that the learning algorithms can be chosen from the enough large amounts of suitable solutions. In this paper, for convenient coding and computation, we mainly discuss how to solve the Optimization (\ref{2.1}) induced from the SVM in an infinite-dimensional $\frac{2m}{2m-1}$-norm RKBS with lower semi-continuous loss function by splitting method (See Section \ref{sec:2}). 

    First, we generalize the representer theorem to show that Optimization (\ref{2.1}) has a minimizer in a finite-dimensional closed subset and Optimization (\ref{2.1}) can be equivalently transferred to a finite-dimensional tensor Optimization (\ref{2.5}). From this equivalent Optimization (\ref{2.5}), we discuss the splitting method based on alternating direction method of multipliers (ADMM) for Optimization (\ref{2.1}). By this splitting method, we obtain two subproblems which are computable easily. Also, the convergence of ADMM is already guaranteed well for the convex Optimizations (see \cite{Boyd2011Distributed}) and some special nonconvex Optimizations by the Kurdyka-Lojasiewicz (KL) property (see \cite{Guo2016Convergence,Li2015Global}). To complete this proof of the splitting method, we reexchange the convergence theorems in \cite{Guo2016Convergence,Li2015Global} and verify the convergence of the splitting method for Optimization (\ref{2.1}) if the loss function is lower semi-continuous and subanalytic for the global convergence to a stationary point. At the same time, we give an example of minimizing the sum of two lower semi-continuous and subanalytic functions with nonlinear constraint by ADMM.

    \vspace{-0.2cm}
    \begin{figure}[H]
    \centering
    \begin{tikzpicture}
    \tikzstyle{arrow} = [thick,->,>=stealth]
    \tikzstyle{arrow1} = [dash dot,->,>=stealth]
    \node(1) at (0,0) [draw,rectangle,align=center,minimum width=2cm,minimum height=1cm]{Infinite-dimensional Optimization  \\ (\ref{2.1}) in $\frac{2m}{2m-1}$-norm RKBS};       
    \node(2) at (7.6,0) [draw,rectangle,align=center,minimum width=2cm,minimum height=1cm]{Finite-dimensional Optimizaition \\ (\ref{2.5}) in Euclidean space};        
    \node(3) at (7.6,-1.75)[draw,rectangle,align=center,minimum width=2cm, minimum height=1cm]{Iterative sequence \\ in Euclidean Space}; 
    \node(4) at (3.8,-1.75)[draw,rectangle,align=center,minimum width=2cm, minimum height=1cm]{Iterative sequence \\ in $\frac{2m}{2m-1}$-norm RKBS};
    \node(5) at (0,-1.75)[draw,rectangle,align=center,minimum width=2cm, minimum height=1cm]{The minimizer of \\ Optimization (\ref{2.1})};
    \draw[arrow] (2) -- (3);
    \draw[arrow] (1) -- node[above]{Representer}(2);
    \draw[arrow] (1) -- node[below]{Theorem}(2);
    \draw[arrow] (1) -- (2);
    \draw[arrow] (2) -- node[left]{ADMM}(3);
    \draw[arrow] (3) -- (4);
    \draw[arrow] (4) -- (5);
    \draw[arrow1] (5) -- (1);
    \end{tikzpicture}
    \caption{The Basic Idea of Splitting Method for Optimization (\ref{2.1}).}
    \end{figure}
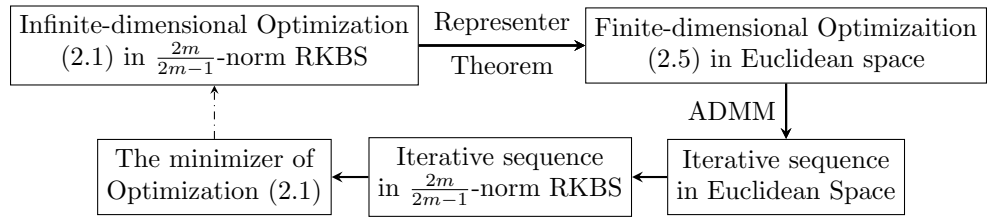
    \vspace{-0.5cm}

% The outline is not required, but we show an example here.

   This paper is organized as follows. We introduce some preliminary materials of the SVM in $\frac{2m}{2m-1}$-norm RKBS in Section \ref{sec:2}. Next, we study the splitting method based on ADMM for Optimization (\ref{2.1}) in Section \ref{sec:3}. Moreover, we discuss the global convergence of the iterative algorithm for lower semi-continuous and subanalytic loss function in Section \ref{sec:4}. Finally, we give some numerical examples for synthetic data and real data in Section \ref{sec:5} to show that the SVM in RKBS with lower semi-continuous loss function is better than the SVM in RKHS with convex loss function in some cases.
 
   \section{Support Vector Machines in $\frac{2m}{2m-1}$-norm Reproducing Kernel Banach Space}
   \label{sec:2}
    In this section, we review some preliminaries of the SVM in $\frac{2m}{2m-1}$-norm RKBS. We denote the set of positive integers as $\mathbb{N}$, the set of natural numbers as $\mathbb{N}_{0}$ and the finite set $\{1,2,...,N\}$ as $\mathbb{N}_{N}$ respectively. Also, $\mathbb{N}^{d}$ is the tensor product of positive integers and $\mathbb{R}^{d}$ is the $d$-dimensional Euclidean space. For the rest of this paper, without specification, every vector is supposed to be a column vector. 

    \subsection{$\frac{2m}{2m-1}$-norm Reproducing Kernel Banach Space}
    \label{Sec:2.1}
    For convenient coding and computation, in this subsection, we review the basic concepts of infinite-dimensional $\frac{2m}{2m-1}$-norm separable RKBS (See \cite[Section 3.2]{Xu2019}), $m\in \mathbb{N}$. For a given nonempty subset $X\subseteq \mathbb{R}^{d}$ and a linearly independent basis $\phi_{n}:X\to \mathbb{R},\ n=1,2,...,$ such that 
    $$
    \sum_{n\in \mathbb{N}}|\phi_{n}(\boldsymbol{x})|<\infty\ \text{for all}\ \boldsymbol{x}\in X, 
    $$
we introduce the following $\frac{2m}{2m-1}$-norm Banach space of continuous functions over $X$, that is,  
    $$
    \mathcal{B}^{\frac{2m}{2m-1}}_{K}(X):=\left\{f:=\sum\limits_{n\in \mathbb{N}} a_{n}\phi_{n}:\ a_{n}\in \mathbb{R},\ \sum_{n\in \mathbb{N}} |a_{n}|^{\frac{2m}{2m-1}}<\infty \right\},
    $$
equipped with the norm $\|f\|_{\mathcal{B}^{\frac{2m}{2m-1}}_{K}(X)}:=\left(\sum\limits_{n\in \mathbb{N}} |a_{n}|^{\frac{2m}{2m-1}}\right)^{\frac{2m-1}{2m}}$ and the well-defined kernel $K:X\times X\to \mathbb{R}$, and
    $$
       K(\boldsymbol{x},\boldsymbol{x}'):=\sum\limits_{n\in \mathbb{N}} \phi_{n}(\boldsymbol{x})\phi_{n}(\boldsymbol{x}'),\ \forall \boldsymbol{x},\boldsymbol{x}'\in X.
    $$
Let $l^{\frac{2m}{2m-1}}$ and $l^{2m}$ be the set of all countable sequences of real scalars with the standard $\frac{2m}{2m-1}$-norm and $2m$-norm, respectively. This construction of $\mathcal{B}^{\frac{2m}{2m-1}}_{K}(X)$ ensures that $\mathcal{B}^{\frac{2m}{2m-1}}_{K}(X)$ is isometrically isomorphic to $l^{\frac{2m}{2m-1}}$, that is,
$\mathcal{B}^{\frac{2m}{2m-1}}_{K}(X)\cong l^{\frac{2m}{2m-1}}$. Let $(l^{\frac{2m}{2m-1}})^{'}$ and $(\mathcal{B}^{\frac{2m}{2m-1}}_{K}(X))^{'}$ be the dual space of $l^{\frac{2m}{2m-1}}$ and $\mathcal{B}^{\frac{2m}{2m-1}}_{K}(X)$, respectively. Since $(l^{\frac{2m}{2m-1}})^{'}\cong l^{2m}$, we have that
    $$
    (\mathcal{B}^{\frac{2m}{2m-1}}_{K}(X))^{'}\cong \mathcal{B}^{2m}_{K}(X):=\left\{g:=\sum\limits_{n\in \mathbb{N}} b_{n}\phi_{n}:\ b_{n}\in \mathbb{R},\ \sum_{n\in \mathbb{N}} |b_{n}|^{2m}<\infty \right\},
    $$
and $\mathcal{B}^{2m}_{K}(X)$ is equipped with the norm $\|g\|_{\mathcal{B}^{2m}_{K}(X)}:=\left(\sum\limits_{n\in \mathbb{N}} |b_{n}|^{2m}\right)^{\frac{1}{2m}}$ and the same kernel $K$. Moreover, we find that the dual bilinear product of $\mathcal{B}^{\frac{2m}{2m-1}}_{K}(X)$ is consistent with the dual bilinear product of $l^{\frac{2m}{2m-1}}$, that is, 
    $$
    \langle f,g \rangle_{\mathcal{B}^{\frac{2m}{2m-1}}_{K}(X)}=\langle \{a_{n}\},\{b_{n}\} \rangle_{l^{\frac{2m}{2m-1}}}   =\sum\limits_{n\in \mathbb{N}} a_{n}b_{n},
    $$ 
for all $f\in \mathcal{B}^{\frac{2m}{2m-1}}_{K}(X)$ and $g\in \mathcal{B}^{2m}_{K}(X)$. Next we check the two-sided reproducing property of $\mathcal{B}^{\frac{2m}{2m-1}}_{K}(X)$, that is, for all $\boldsymbol{x},\boldsymbol{x}'\in X$, $f\in \mathcal{B}^{\frac{2m}{2m-1}}_{K}(X)$, $g\in \mathcal{B}^{2m}_{K}(X)$,
    $$
       K(\boldsymbol{x},\cdot)=\sum_{n\in \mathbb{N}} \phi_{n}(\boldsymbol{x})\phi_{n} \in \mathcal{B}^{2m}_{K}(X),\ \ \langle f, K(\boldsymbol{x},\cdot)\rangle_{\mathcal{B}^{\frac{2m}{2m-1}}_{K}(X)}=\sum_{n\in \mathbb{N}} a_{n}\phi_{n}(\boldsymbol{x})=f(\boldsymbol{x}),
    $$          
and
    $$
       K(\cdot,\boldsymbol{x}')=\sum_{n\in \mathbb{N}} \phi_{n}(\boldsymbol{x}')\phi_{n}\in \mathcal{B}^{\frac{2m}{2m-1}}_{K}(X),\ \ \langle K(\cdot,\boldsymbol{x}'),g\rangle_{\mathcal{B}^{\frac{2m}{2m-1}}_{K}(X)}=\sum_{n\in \mathbb{N}} b_{n}\phi_{n}(\boldsymbol{x}')=g(\boldsymbol{x}').
    $$
Specially, for any $m\in \mathbb{N}$, $\mathcal{B}^{\frac{2m}{2m-1}}_{K}(X)$ has the same reproducing kernel $K$. In particular, when $m=1$, $\frac{2m}{2m-1}=2$ and $\mathcal{B}_{K}^{2}(X)$ is consistent with the separable RKHS.

    \subsection{Support Vector Machines in $\frac{2m}{2m-1}$-norm Reproducing Kernel Banach Space}
    \label{Sec:2.2}
    In this subsection, we introduce some preliminaries of the SVM in $\mathcal{B}^{\frac{2m}{2m-1}}_{K}(X)$. Let $X$ and $Y$ be the sample space in $\mathbb{R}^{d}$ and the label space in $\mathbb{R}$, respectively. We have the training data 
    $$
      D:=\{(\boldsymbol{x}_{i},y_{i}):i=1,2,...,N\}\subseteq X\times Y, 
    $$
composed of input data $\boldsymbol{x}_{1},\boldsymbol{x}_{2},...,\boldsymbol{x}_{N}\in X$ and output data $y_{1},y_{2},...,y_{N}\in Y$. To avoid overfitting, we use the training data $D$ to learn a function $f_{D}:X\to \mathbb{R}$ that achieves the smallest regularized possible empirical risk in a given kernel-based function space over $X$ and build the corresponding SVM $\mathcal{R} f_{D}:X\to Y$ to predict the label of $\boldsymbol{x}\neq \boldsymbol{x}_{i},\ i=1,2,...,N$. According to the task requirements, we use $f_{D}$ to construct different SVMs. For example, if $Y=\mathbb{R}$, then we build $\mathcal{R} f_{D}=f_{D}$ for regression. If $Y=\{+1,-1\}$, then we can build the following SVM
    $$
        \mathcal{R} f_{D}(\boldsymbol{x})=\begin{cases} +1, & f_{D}(\boldsymbol{x}) \geq0,\\
                                 -1, & f_{D}(\boldsymbol{x})<0.
                   \end{cases}   
    $$
for binary classification (See \cite[Section 8 and 9]{MR2450103}). The classical SVM is discussed in a $d$-dimensional RKHS consisting all linear functionals on $X$ with linear kernel (see \cite{Cortes1995} and \cite[Section 1.3]{MR2450103}). For more flexible kernels such as those of Gaussian kernels, which belong to the most important kernels in practice, the corresponding RKHS is infinite-dimensional such that the learning algorithms can be chosen from the enough large amounts of suitable solutions (see in \cite[Chapter 5]{MR2450103}). The SVM in RKHS has two main areas of application: risk minimization in machine learning and data mining. The typical examples of the SVM in RKHS are the price assessment of a house based on certain characteristics for regression and the automatic recognition of hand-written digits for classification. After the success of the SVM in RKHS, people begin to discuss the SVM in RKBS because RKHS is a special case of RKBS. 

    Since $\mathcal{B}_{K}^{2}(X)$ is a separable RKHS with reproducing kernel $K$, the SVM in $\mathcal{B}_{K}^{2}(X)$ is consistent with the SVM in separable RKHS. Recently, \cite[Section 5.4]{Xu2019} generalizes the SVM in $\mathcal{B}_{K}^{2}(X)$ to $\mathcal{B}^{\frac{2m}{2m-1}}_{K}(X)$. For a given $\mathcal{B}^{\frac{2m}{2m-1}}_{K}(X)$, the training data $D$ and $\lambda>0$, we will find a function in $\mathcal{B}^{\frac{2m}{2m-1}}_{K}(X)$ that achieves the smallest regularized possible empirical risk, that is,
    \begin{equation}\label{2.1}
      \mathop{\mathrm{inf}}_{f\in \mathcal{B}^{\frac{2m}{2m-1}}_{K}(X)}\ \frac{1}{N}\sum_{i\in \mathbb{N}_{N}} L(\boldsymbol{x}_{i},y_{i},f(\boldsymbol{x}_{i}))+\lambda \|f\|_{\mathcal{B}^{\frac{2m}{2m-1}}_{K}(X)}^{\frac{2m}{2m-1}},
    \end{equation}
where $L:X\times Y\times \mathbb{R}\rightarrow [0,\infty)$ is a given loss function and $\lambda \|f\|_{\mathcal{B}^{\frac{2m}{2m-1}}_{K}(X)}^{\frac{2m}{2m-1}}$ is the regularization term used to penalize $f$ with the large RKBS norm. In the following, we will interpret $L(\boldsymbol{x},y,f(\boldsymbol{x}))$ as the loss of predicting $y$ by $f(\boldsymbol{x})$ if $\boldsymbol{x}$ is observed, that is, the smaller the value $L(\boldsymbol{x}, y,f(\boldsymbol{x}))$ is, the better $f(\boldsymbol{x})$ predicts $y$ in the sense of $L$ (See Section \ref{sec:5} for more detail). It is clear that Optimization (\ref{2.1}) is infinite-dimensional and nonnegative. We denote the minimizer of Optimization (\ref{2.1}) as $f_{D}^{\frac{2m}{2m-1}}$, then we build the SVM $\mathcal{R} f_{D}^{\frac{2m}{2m-1}}:X\to Y$ according to the task requirement. Moreover, SVMs constructed by different minimizers of Optimization (\ref{2.1}) have no difference in performance. Hence, we just need to find a minimizer of Optimization (\ref{2.1}). Next, we focus on the minimizer of Optimization (\ref{2.1}).

    \subsection{Duality Mapping and Tensor}
    \label{sec:2.3}
    To discuss the minimizer of Optimization (\ref{2.1}), we review some concepts of duality mapping and tensor in $\mathcal{B}^{\frac{2m}{2m-1}}_{K}(X)$ (See \cite[Section 2.1]{Unser2020} and \cite[Section 3]{Ye2022}). Recall that a mapping (not necessarily linear) $\Psi_{K}^{\frac{2m}{2m-1}}:\mathcal{B}^{\frac{2m}{2m-1}}_{K}(X)\to \mathcal{B}^{2m}_{K}(X)$ is called duality mapping if 
    $$
      \|\Psi_{K}^{\frac{2m}{2m-1}}(f)\|_{\mathcal{B}^{2m}_{K}(X)}=\|f\|_{\mathcal{B}^{\frac{2m}{2m-1}}_{K}(X)}^{\frac{1}{2m-1}}\ \text{and}\ \langle f,\Psi_{K}^{\frac{2m}{2m-1}}(f) \rangle_{\mathcal{B}^{\frac{2m}{2m-1}}_{K}(X)}=\|f\|_{\mathcal{B}^{\frac{2m}{2m-1}}_{K}(X)}^{\frac{2m}{2m-1}}.
    $$
Since $l^{\frac{2m}{2m-1}}$ is reflexive, strictly convex and smooth, $\mathcal{B}^{\frac{2m}{2m-1}}_{K}(X)$ is also reflexive, strictly convex and smooth. Hence, $\Psi_{K}^{\frac{2m}{2m-1}}$ is a homeomorphism, and
    \begin{align*}
        \Psi_{K}^{\frac{2m}{2m-1}}(f)&=\sum_{n\in \mathbb{N}} (a_{n})^{\frac{1}{2m-1}}\phi_{n},\ \forall f\in \mathcal{B}^{\frac{2m}{2m-1}}_{K}(X), \\
        (\Psi_{K}^{\frac{2m}{2m-1}})^{-1}(g)&=\sum\limits_{n\in \mathbb{N}}(b_{n})^{2m-1}\phi_{n},\ \forall g\in \mathcal{B}^{2m}_{K}(X).
    \end{align*}
If $m=1$, then $\Psi_{K}^{2}$ is an identity mapping and thus linear from $\mathcal{B}_{K}^{2}(X)$ onto $\mathcal{B}_{K}^{2}(X)$. However, if $m>1$, then $\Psi_{K}^{\frac{2m}{2m-1}}$ is nonlinear but continuous. Furthermore, we obtain the following formula of Fr\'echet derivative (see the proof of \cite[Theorem 5.2]{Xu2019})
    \begin{equation}\label{2.2}
       \nabla(\|\cdot\|_{\mathcal{B}^{\frac{2m}{2m-1}}_{K}(X)})(f)=  \frac{\Psi_{K}^{\frac{2m}{2m-1}}(f)} {\|f\|_{\mathcal{B}^{\frac{2m}{2m-1}}_{K}(X)}^{\frac{1}{2m-1}}}   \in \mathcal{B}^{2m}_{K}(X),\ \ \forall f\neq 0,
    \end{equation}
where $\nabla$ denotes the Fr\'echet derivative. In particular, for a differentiable real function, $\nabla$ represents the gradient and $\nabla^{2}$ represents the Hessian matrix, respectively.

    Next we review some basic theory of tensor in $\mathcal{B}^{\frac{2m}{2m-1}}_{K}(X)$. For convenience of readers, the notations and operations of tensors are defined as in the book \cite{Qi2018}. For the training data $D$, let 
    $$
       \boldsymbol{\boldsymbol{\Phi}}_{n}:=\left(\phi_{n}(\boldsymbol{x}_{1}),\phi_{n}(\boldsymbol{x}_{2}),..,\phi_{n}(\boldsymbol{x}_{N})\right)^{T}\in \mathbb{R}^{N},\ n\in \mathbb{N}. 
    $$
Since $2m$ is even, we define the following $(2m)$-th order $N$-dimensional real tensor in $\mathcal{B}^{\frac{2m}{2m-1}}_{K}(X)$
    $$
       \mathcal{A}_{K}^{2m}:=\left(\sum\limits_{n\in \mathbb{N}} \phi_{n}(\boldsymbol{x}_{i_{1}})\phi_{n}(\boldsymbol{x}_{i_{2}})...\phi_{n}(\boldsymbol{x}_{i_{2m}})\right)_{i_{1},i_{2},...,i_{2m}=1}^{N,N,...,N}=\sum\limits_{n\in \mathbb{N}}(\boldsymbol{\Phi}_{n})^{\otimes 2m},
    $$
where $\otimes$ denotes the tensor outer product. Specially, if $m=1$, then $\mathcal{A}_{K}^{2}$ is a matrix in $\mathcal{B}_{K}^{2}(X)$. It is easy to check that $\mathcal{A}_{K}^{2m}$ is symmetric, that is, all entries of $\mathcal{A}_{K}^{2m}$ are invariant under any permutation of the indices. For any $\boldsymbol{c}\in \mathbb{R}^{N}$, we denote
    \begin{align*}
        \mathcal{A}_{K}^{2m}\boldsymbol{c}^{2m}:=&\left(\sum\limits_{n\in \mathbb{N}}(\boldsymbol{\Phi}_{n})^{\otimes 2m}\right)\cdot \boldsymbol{c}^{\otimes 2m}=\sum_{n\in \mathbb{N}} \left(\boldsymbol{\Phi}_{n}^{T}\boldsymbol{c}\right)^{2m}\geq 0, \\
        \mathcal{A}_{K}^{2m}\boldsymbol{c}^{2m-1}:=&\left(\sum\limits_{n\in \mathbb{N}}(\boldsymbol{\Phi}_{n})^{\otimes 2m}\right)\cdot \boldsymbol{c}^{\otimes 2m-1}=\sum_{n\in \mathbb{N}} \left(\boldsymbol{\Phi}_{n}^{T}\boldsymbol{c}\right)^{2m-1}\boldsymbol{\Phi}_{n}\in \mathbb{R}^{N}, \\
        \mathcal{A}_{K}^{2m}\boldsymbol{c}^{2m-2}:=&\left(\sum\limits_{n\in \mathbb{N}}(\boldsymbol{\Phi}_{n})^{\otimes 2m}\right)\cdot \boldsymbol{c}^{\otimes 2m-2}=\sum_{n\in \mathbb{N}} \left(\boldsymbol{\Phi}_{n}^{T}\boldsymbol{c}\right)^{2m-2}\boldsymbol{\Phi}_{n}\boldsymbol{\Phi}_{n}^{T}\in \mathbb{R}^{N\times N}.
    \end{align*}
Specially, we adopt the convention that $\mathcal{A}_{K}^{2}\boldsymbol{c}^{0}=\mathcal{A}_{K}^{2}$ for any $\boldsymbol{c}\neq \boldsymbol{0}$. It is clear that
    \begin{equation}\label{2.3}
      \mathcal{A}_{K}^{2m}\boldsymbol{c}^{2m}=(\mathcal{A}_{K}^{2m}\boldsymbol{c}^{2m-1})^{T} \boldsymbol{c},\ \ \mathcal{A}_{K}^{2m}\boldsymbol{c}^{2m-1}=\mathcal{A}_{K}^{2m}\boldsymbol{c}^{2m-2}\cdot \boldsymbol{c}.
    \end{equation}
Also, by derivative rule, we have that for any $\boldsymbol{c}\in \mathbb{R}^{N}$,
    \begin{equation}\label{2.4}
       \nabla(\mathcal{A}_{K}^{2m}(\cdot)^{2m})(\boldsymbol{c})=2m\mathcal{A}_{K}^{2m}\boldsymbol{c}^{2m-1},\ \ \nabla^{2}(\mathcal{A}_{K}^{2m}(\cdot)^{2m})(\boldsymbol{c})=2m(2m-1)\mathcal{A}_{K}^{2m}\boldsymbol{c}^{2m-2}.
    \end{equation}
By definition, it is easy to check that $\mathcal{A}_{K}^{2m}\boldsymbol{c}^{2m-2}$ is symmetric. For any $\boldsymbol{d}\in \mathbb{R}^{N}$, we have that
    \begin{equation*}
       \boldsymbol{d}^{T}\left(\mathcal{A}_{K}^{2m}\boldsymbol{c}^{2m-2}\right)\boldsymbol{d}=\sum_{n\in \mathbb{N}}\left(\boldsymbol{\Phi}_{n}^{T}\boldsymbol{c}\right)^{2m-2}\left(\boldsymbol{\Phi}_{n}^{T}\boldsymbol{d}\right)^{2}\geq 0.
    \end{equation*}
Thus, $\mathcal{A}_{K}^{2m}\boldsymbol{c}^{2m-2}$ is a positive definite matrix.

    \subsection{Representer Theorem}
    The loss functions and RKBSs are the core research content, because it determines the minimizer of Optimization (\ref{2.1}). For machine learning, the most common loss function is convex loss function, that is, $L(\boldsymbol{x}, y,\cdot)$ is a convex function for all $\boldsymbol{x}\in X$ and $y\in Y$. Besides convexity, we can define other loss functions in a similar way, such as continuity, smoothness, lower semi-continuity, etc. Specially, if $L$ is a convex loss function, then $L$ is a lower semi-continuous loss function. But the lower semi-continuity can not imply the convexity.

    If $L$ is a convex loss function, then \cite[Theorem 5.2]{Xu2019} assures that Optimization (\ref{2.1}) in $\mathcal{B}^{\frac{2m}{2m-1}}_{K}(X)$ with convex loss function has a unique minimizer contained in a known finite-dimensional space spanned by the reproducing kernel $K$ and the training data $D$, even if the space $\mathcal{B}^{\frac{2m}{2m-1}}_{K}(X)$ itself is substantially larger. The convex loss function is viewed as highly preferable in many publications because of their computational advantages (unique minimizer, ease-of-use, ability to be efficiently optimized by convex optimization tools, etc.). 

    Recently, some nonconvex and lower semi-continuous loss functions, such as ramp loss and truncated pinball loss are proposed and used in SVM (see \cite{Brooks2011, Feng2016, Liu2016, Shen2017}). From some preliminary numerical experiments in \cite[Chapter 6]{Xu2019}, it shows that the SVM in $\mathcal{B}_{K}^{\frac{4}{3}}(X)$ may perform better than the SVM in $\mathcal{B}_{K}^{2}(X)$ with Hinge loss and Gaussian kernel in some cases. Furthermore, the SVM in $\mathcal{B}_{K}^{\frac{6}{5}}(X)$ may perform better than the SVM in $\mathcal{B}_{K}^{\frac{4}{3}}(X)$ and $\mathcal{B}_{K}^{2}(X)$ with a nonconvex linear piecewise loss function and Gaussian kernel in some cases (See Section \ref{sec:5}). But an effective algorithm for the SVM in $\mathcal{B}^{\frac{2m}{2m-1}}_{K}(X)$ with lower semi-continuous loss function is still lack of study. Thus, in the rest of this paper, we mainly study how to solve Optimization (\ref{2.1}) in $\mathcal{B}^{\frac{2m}{2m-1}}_{K}(X)$ with lower semi-continuous loss function. First, we establish the representer theorem by the same techniques as those used in \cite{Huang2019}. We present it below.

    \begin{lemma}\label{Lemma:2.1}
       If $L$ is a lower semi-continuous loss function, then Optimization (\text{\normalfont{\ref{2.1}}}) has a minimizer $f_{D}^{\frac{2m}{2m-1}}$ such that
    $$
      f_{D}^{\frac{2m}{2m-1}}\in \Delta_{K}^{\frac{2m}{2m-1}}(X):=(\Psi_{K}^{\frac{2m}{2m-1}})^{-1}(\mathrm{span}\{K(\boldsymbol{x}_{1},\cdot),...,K(\boldsymbol{x}_{N},\cdot)\}).
    $$
%where $\mathrm{span}\{K(\boldsymbol{x}_{1},\cdot),...,K(\boldsymbol{x}_{N},\cdot)\}$ denotes the set of all finite linear combinations of $\{K(\boldsymbol{x}_{1},\cdot),...,K(\boldsymbol{x}_{N},\cdot)\}$.
    \end{lemma}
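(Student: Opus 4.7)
The plan is to prove Lemma \ref{Lemma:2.1} in two stages: first, reduce the infinite-dimensional infimum in Optimization (\ref{2.1}) to an infimum over the finite-dimensional set $\Delta_{K}^{\frac{2m}{2m-1}}(X)$; second, show that this reduced infimum is attained by combining the lower semi-continuity of $L$ with a compactness argument on the finite-dimensional image. This mirrors the classical representer-theorem strategy, but with the duality mapping $\Psi_{K}^{\frac{2m}{2m-1}}$ playing the role that orthogonal projection plays in the Hilbert setting.

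For the reduction, I would take any $f\in \mathcal{B}^{\frac{2m}{2m-1}}_{K}(X)$, set $\alpha_{i}:=f(\boldsymbol{x}_{i})$, and consider the auxiliary minimum-norm interpolation problem
\[
   \min\ \|g\|_{\mathcal{B}^{\frac{2m}{2m-1}}_{K}(X)}^{\frac{2m}{2m-1}} \quad \text{subject to}\quad \langle g, K(\boldsymbol{x}_{i},\cdot)\rangle_{\mathcal{B}^{\frac{2m}{2m-1}}_{K}(X)}=\alpha_{i},\ i\in \mathbb{N}_{N}.
\]
Reflexivity, strict convexity, and smoothness of $\mathcal{B}^{\frac{2m}{2m-1}}_{K}(X)$ (recorded in Section \ref{sec:2.3}) guarantee that this strictly convex problem has a unique minimizer $\tilde{f}$. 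Applying Fermat's rule to this linearly constrained problem, together with the chain-rule consequence of (\ref{2.2}), namely $\nabla(\|\cdot\|_{\mathcal{B}^{\frac{2m}{2m-1}}_{K}(X)}^{\frac{2m}{2m-1}})(\tilde{f})=\frac{2m}{2m-1}\Psi_{K}^{\frac{2m}{2m-1}}(\tilde{f})$, I would obtain multipliers $\mu_{1},\ldots,\mu_{N}\in \mathbb{R}$ with $\Psi_{K}^{\frac{2m}{2m-1}}(\tilde{f})=\sum_{i\in \mathbb{N}_{N}}\mu_{i}K(\boldsymbol{x}_{i},\cdot)$, which places $\tilde{f}$ in $\Delta_{K}^{\frac{2m}{2m-1}}(X)$. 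Since $\tilde{f}$ matches $f$ on the training inputs and has RKBS-norm no larger than $\|f\|_{\mathcal{B}^{\frac{2m}{2m-1}}_{K}(X)}$, the Optimization (\ref{2.1}) objective at $\tilde{f}$ is no larger than at $f$, so the infimum over the whole space coincides with the infimum restricted to $\Delta_{K}^{\frac{2m}{2m-1}}(X)$.

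For existence on $\Delta_{K}^{\frac{2m}{2m-1}}(X)$, I would pick a minimizing sequence $\{f_{k}\}\subseteq \Delta_{K}^{\frac{2m}{2m-1}}(X)$. Coercivity of $\lambda\|\cdot\|^{\frac{2m}{2m-1}}$ together with $L\geq 0$ and $\lambda>0$ forces $\{\|f_{k}\|_{\mathcal{B}^{\frac{2m}{2m-1}}_{K}(X)}\}$ to be bounded, so $\{\Psi_{K}^{\frac{2m}{2m-1}}(f_{k})\}$ is a bounded sequence in the finite-dimensional closed subspace $V:=\mathrm{span}\{K(\boldsymbol{x}_{1},\cdot),\ldots,K(\boldsymbol{x}_{N},\cdot)\}\subseteq \mathcal{B}^{2m}_{K}(X)$. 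By finite-dimensional compactness I extract a convergent subsequence in $V$ and pull it back via the homeomorphism $(\Psi_{K}^{\frac{2m}{2m-1}})^{-1}$ to obtain a limit $\tilde{f}\in \Delta_{K}^{\frac{2m}{2m-1}}(X)$. Each point evaluation $f\mapsto \langle f,K(\boldsymbol{x}_{i},\cdot)\rangle=f(\boldsymbol{x}_{i})$ is continuous, and $L(\boldsymbol{x}_{i},y_{i},\cdot)$ is lower semi-continuous, so the empirical risk term is lower semi-continuous along the extracted subsequence; combined with continuity of the norm, this shows $\tilde{f}$ attains the infimum.

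The main obstacle I expect is the reduction step. Because $\Psi_{K}^{\frac{2m}{2m-1}}$ is nonlinear when $m>1$, the set $\Delta_{K}^{\frac{2m}{2m-1}}(X)$ is not a linear subspace, so one cannot simply decompose $f$ into a ``projection'' plus an orthogonal remainder as in the Hilbert case. The argument must instead pass through a dual-space variational characterization, invoking strict convexity to get uniqueness of the minimum-norm interpolant, smoothness to justify Fermat's rule, and formula (\ref{2.2}) to translate the resulting optimality condition into membership in $\Delta_{K}^{\frac{2m}{2m-1}}(X)$. Once this representer-type reduction is in hand, the remaining compactness and lower-semi-continuity arguments live on the finite-dimensional image $V$ and are routine.
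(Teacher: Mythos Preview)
Your proposal is correct and proceeds by a genuinely different route from the paper's own proof. The paper argues in the opposite order: it first invokes (an adaptation of) the general representer theorem of Huang--Liu--Tan--Ye to obtain existence of a minimizer $f_{D}^{\frac{2m}{2m-1}}$ directly on all of $\mathcal{B}^{\frac{2m}{2m-1}}_{K}(X)$, and then applies formula~(\ref{2.2}) to the already-known minimizer to conclude $\Psi_{K}^{\frac{2m}{2m-1}}(f_{D}^{\frac{2m}{2m-1}})\in \mathrm{span}\{K(\boldsymbol{x}_{i},\cdot)\}$. You instead reduce first, via a minimum-norm interpolation argument that is self-contained and does not cite an external representer theorem, and only afterwards establish existence by a finite-dimensional compactness argument on $V=\mathrm{span}\{K(\boldsymbol{x}_{i},\cdot)\}$ pulled back through the homeomorphism $(\Psi_{K}^{\frac{2m}{2m-1}})^{-1}$. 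The paper's route is shorter because it defers the heavy lifting to \cite{Huang2019}; your route is more elementary and transparent, since it avoids weak-compactness or weak-lower-semicontinuity arguments on the infinite-dimensional space entirely and works only with strong convergence in finite dimensions. Both approaches ultimately pivot on the same identity~(\ref{2.2}) linking the norm derivative to the duality map, but they deploy it at different stages: the paper at the very end, you in the middle of the reduction step.
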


    \begin{proof}
    From (\ref{Sec:2.1}), $\mathcal{B}^{\frac{2m}{2m-1}}_{K}(X)$ is a two-sided RKBS and thus a right-sided RKBS (See \cite[Definition 2.1]{Xu2019}). By \cite[Definition 2.2.27]{Dales} and reflexity of $\mathcal{B}^{\frac{2m}{2m-1}}_{K}(X)$, the predual space of $\mathcal{B}^{\frac{2m}{2m-1}}_{K}(X)$ is $\mathcal{B}^{2m}_{K}(X)$. Moreover, the right-sided reproducing property of $\mathcal{B}^{\frac{2m}{2m-1}}_{K}(X)$ shows that $\{K(\boldsymbol{x},\cdot):\boldsymbol{x}\in X\}\subseteq \mathcal{B}^{2m}_{K}(X)$. In conclusion, we use $\lambda\|\cdot\|_{\mathcal{B}^{\frac{2m}{2m-1}}_{K}(X)}^{\frac{2m}{2m-1}}$ instead of $\lambda\|\cdot\|_{\mathcal{B}^{\frac{2m}{2m-1}}_{K}(X)}^{2}$ to reproof \cite[Theorem 3.1]{Huang2019} and use similar line of arguments of \cite[Corollary 4.1]{Huang2019} to show that Optimization (\ref{2.1}) has a minimizer $f_{D}^{\frac{2m}{2m-1}}$. Moreover, if $f_{D}^{\frac{2m}{2m-1}}\neq 0$, then (\ref{2.2}) guarantees that
    $$
    \frac{\Psi_{K}^{\frac{2m}{2m-1}}(f_{D}^{\frac{2m}{2m-1}})} {\|f_{D}^{\frac{2m}{2m-1}}\|_{\mathcal{B}^{\frac{2m}{2m-1}}_{K}(X)}^{\frac{1}{2m-1}}} =\nabla(\|\cdot\|_{\mathcal{B}^{\frac{2m}{2m-1}}_{K}(X)})(f_{D}^{\frac{2m}{2m-1}})\in \mathrm{span}\{K(\boldsymbol{x}_{1},\cdot),...,K(\boldsymbol{x}_{N},\cdot)\},
    $$  
which ensures that
    $$
      f_{D}^{\frac{2m}{2m-1}}\in \Delta_{K}^{\frac{2m}{2m-1}}(X)=(\Psi_{K}^{\frac{2m}{2m-1}})^{-1}(\mathrm{span}\{K(\boldsymbol{x}_{1},\cdot),...,K(\boldsymbol{x}_{N},\cdot)\}).
    $$
If $f_{D}^{\frac{2m}{2m-1}}=0$, then the proof is straight-forward. This completes the proof.
    \end{proof}

    Since $\Psi_{K}^{\frac{2m}{2m-1}}$ is a homeomorphism and $\mathrm{span}\{K(\boldsymbol{x}_{1},\cdot),...,K(\boldsymbol{x}_{N},\cdot)\}$ is a finite-dimensional subspace and thus a closed subset in $\mathcal{B}^{2m}_{K}(X)$, $\Delta_{K}^{\frac{2m}{2m-1}}(X)$ is a closed subset in $\mathcal{B}^{\frac{2m}{2m-1}}_{K}(X)$. If $m=1$, then $\Psi_{K}^{2}$ is linear and $\Delta_{K}^{2}(X)$ is a subspace. If $m>1$, then $\Psi_{K}^{\frac{2m}{2m-1}}$ is nonlinear and $\Delta_{K}^{\frac{2m}{2m-1}}(X)$ is not a subspace.

    \begin{remark}
      If $L$ is a lower semi-continuous and nonconvex loss function, then Optimization (\ref{2.1}) may have more than one minimizer and at least one of them is in $\Delta_{K}^{\frac{2m}{2m-1}}(X)$. Hence, we focus on finding the minimizer in $\Delta_{K}^{\frac{2m}{2m-1}}(X)$.
   \end{remark}

    \subsection{Tensor Optimization}
    In this subsection, we introduce how to equivalently transfer Optimization (\ref{2.1}) to a finite-dimensional tensor Optimization. First, Lemma \ref{Lemma:2.1} shows that Optimization (\ref{2.1}) in $\mathcal{B}^{\frac{2m}{2m-1}}_{K}(X)$ can be equivalently transferred to Optimization (\ref{2.1}) in $\Delta_{K}^{\frac{2m}{2m-1}}(X)$. For any $f\in \Delta_{K}^{\frac{2m}{2m-1}}(X)$, there exists a vector $\boldsymbol{c}=(c_{1},c_{2}...,c_{N})^{T}\in \mathbb{R}^{N}$ such that
    $$
       \Psi_{K}^{\frac{2m}{2m-1}}(f)=\sum_{i\in \mathbb{N}_{N}}c_{i}K(\boldsymbol{x}_{i},\cdot)=\sum_{n\in \mathbb{N}}\left(\boldsymbol{\Phi}_{n}^{T}\boldsymbol{c}\right)\phi_{n}.
    $$
Thus, $f$ has the representation
    $$
       f=(\Psi_{K}^{\frac{2m}{2m-1}})^{-1}\left(\sum_{n\in \mathbb{N}}\left(\boldsymbol{\Phi}_{n}^{T}\boldsymbol{c}\right)\phi_{n}\right)=\sum_{n\in \mathbb{N}} \left(\boldsymbol{\Phi}_{n}^{T}\boldsymbol{c}\right)^{2m-1}\phi_{n},
    $$
which ensures that
    $$
       f(\boldsymbol{x}_{i})=\sum_{n\in \mathbb{N}} \left(\boldsymbol{\Phi}_{n}^{T}\boldsymbol{c}\right)^{2m-1}\phi_{n}(\boldsymbol{x}_{i})=(\mathcal{A}_{K}^{2m}\boldsymbol{c}^{2m-1})_{i},\ i=1,2,...,N.
    $$
On the other hand, by definition of $\|\cdot\|_{\mathcal{B}^{\frac{2m}{2m-1}}_{K}(X)}$, for any $f\in \Delta_{K}^{\frac{2m}{2m-1}}(X)$, we compute
    $$
       \|f\|_{\mathcal{B}^{\frac{2m}{2m-1}}_{K}(X)}^{\frac{2m}{2m-1}}=\sum_{n\in \mathbb{N}} \left| \left(\boldsymbol{\Phi}_{n}^{T}\boldsymbol{c}\right)^{2m-1}\right|^{\frac{2m}{2m-1}}=\sum_{n\in \mathbb{N}} \left(\boldsymbol{\Phi}_{n}^{T}\boldsymbol{c}\right)^{2m}=\mathcal{A}_{K}^{2m}\boldsymbol{c}^{2m}.
    $$
Thus, Optimization (\ref{2.1}) in $\Delta_{K}^{\frac{2m}{2m-1}}(X)$ can be equivalently transferred to a tensor Optimization in $\mathbb{R}^{N}$
    \begin{equation}\label{2.5}
       \min_{\boldsymbol{c}\in \mathbb{R}^{N}}\ \frac{1}{N}\sum_{i\in \mathbb{N}_{N}} L(\boldsymbol{x}_{i},y_{i},(\mathcal{A}_{K}^{2m}\boldsymbol{c}^{2m-1})_{i})+\lambda \mathcal{A}_{K}^{2m}\boldsymbol{c}^{2m}.
    \end{equation}
Since $f_{D}^{\frac{2m}{2m-1}}\in \Delta_{K}^{\frac{2m}{2m-1}}(X)$ is a minimizer of Optimization (\ref{2.1}) in $\mathcal{B}^{\frac{2m}{2m-1}}_{K}(X)$, there exists $\boldsymbol{c}_{D}^{\frac{2m}{2m-1}}\in \mathbb{R}^{N}$ such that
    $$
       f_{D}^{\frac{2m}{2m-1}}=\sum_{n\in \mathbb{N}} \left(\boldsymbol{\Phi}_{n}^{T}\boldsymbol{c}_{D}^{\frac{2m}{2m-1}}\right)^{2m-1}\phi_{n}
    $$
and $\boldsymbol{c}_{D}^{\frac{2m}{2m-1}}$ is a minimizer of Optimization (\ref{2.5}). This ensures that we employ the finite suitable parameters to reconstruct the SVM in $\mathcal{B}^{\frac{2m}{2m-1}}_{K}(X)$. 

    \section{Splitting Method for Support Vector Machines in $\frac{2m}{2m-1}$-norm Reproducing Kernel Banach Space}
    \label{sec:3}
    In this section, we consider to find an algorithm based on Optimization (\ref{2.5}) to compute Optimization (\ref{2.1}) easily. At present, we mainly use subgradient method, Lagrange multipliers method and sequential minimal optimization (SMO) to design algorithms for SVM. These classical numerical algorithms are suitable for solving convex and smooth programs. Since $\boldsymbol{c}\mapsto (\mathcal{A}_{K}^{2m}\boldsymbol{c}^{2m-1})_{i}$ is continuous, the lower semi-continuity of $L(\boldsymbol{x}_{i},y_{i},\cdot)$ guarantees the lower semi-continuity of $\boldsymbol{c}\mapsto L(\boldsymbol{x}_{i},y_{i},(\mathcal{A}_{K}^{2m}\boldsymbol{c}^{2m-1})_{i})$ for all $i=1,2,...,N$ which ensures Optimization (\ref{2.5}) is lower semi-continuous which maybe nonsmooth or nonconvex. Many classical numerical algorithms are not suitable for Optimization (\ref{2.5}).

    The ADMM algorithm, as one of splitting techniques, can even be used to minimize nonsmooth or nonconvex functions. Also, \cite{Wang2021} discuss how to use ADMM for the traditional SVM with 0-1 loss function. For general lower semi-continuous loss functions and kernels, we observe that the subproblems in ADMM for Optimization (\ref{2.5}) can be splitted into some Optimizations which are then easier to handle. Hence, we will study how to solve Optimization (\ref{2.1}) by the splitting method based on ADMM. 

    \begin{remark}
    In the rest of this paper, we assume that the training data $D$, loss function $L$, RKBS $\mathcal{B}^{\frac{2m}{2m-1}}_{K}(X)$ and $\lambda>0$ in Optimization (\ref{2.1}) and (\ref{2.5}) are given, which means that $N\in \mathbb{N}$, $m\in \mathbb{N}$ and the tensor $\mathcal{A}_{K}^{2m}$ are confirmed before algorithm discussion. 
    \end{remark}

    For notational convenience, let 
    $$
       F(\boldsymbol{\alpha}):=\frac{1}{N}\sum_{i\in \mathbb{N}_{N}} L(\boldsymbol{x}_{i},y_{i},\alpha_{i}), \ \  G(\boldsymbol{c}):=\lambda \mathcal{A}_{K}^{2m}\boldsymbol{c}^{2m}.
    $$
To describe the algorithm, we first reformulate Optimization (\ref{2.5}) as
    \begin{equation*}
    \begin{array}{cl}
    \displaystyle\min_{\boldsymbol a, \boldsymbol c}\enskip & \displaystyle  F(\boldsymbol{\alpha})+G(\boldsymbol{c}),  \\
    \text{s.t.}\enskip & \boldsymbol{\alpha}=\mathcal{A}_{K}^{2m}\boldsymbol{c}^{2m-1}.
    \end{array}
    \end{equation*}
Recall that the augmented Lagrangian function for the above problem is defined as:
    \begin{equation*}
    \mathcal{L}_{\beta}(\boldsymbol{\alpha},\boldsymbol{c},\boldsymbol{\gamma}):=F(\boldsymbol{\alpha})+G(\boldsymbol{c})+\boldsymbol{\gamma}^{T}(\boldsymbol{\alpha}-\mathcal{A}_{K}^{2m}\boldsymbol{c}^{2m-1})+\frac{\beta}{2}\|\boldsymbol{\alpha}-\mathcal{A}_{K}^{2m}\boldsymbol{c}^{2m-1}\|^{2},
    \end{equation*}
where the Lagrangian multiplier $\beta>0$ and $\|\cdot\|$ denotes the $2$-norm in Euclidean space. The splitting method based on ADMM is then presented as follows. Suppose that the algorithm is initialized at $(\boldsymbol{\alpha}^{0},\boldsymbol{c}^{0}, \boldsymbol{\gamma}^{0})$, its iterative scheme is
      \begin{align}
         \boldsymbol{\alpha}_{k+1}&\in \mathop{\mathrm{argmin}}_{\boldsymbol{\alpha}\in \mathbb{R}^{N}}\ \mathcal{L}_{\beta}(\boldsymbol{\alpha},\boldsymbol{c}_{k},\boldsymbol{\gamma}_{k}),\tag{S-1}  \label{S-1} \\
         \boldsymbol{c}_{k+1}&\in \mathop{\mathrm{argmin}}_{\boldsymbol{c}\in \mathbb{R}^{N}}\ \mathcal{L}_{\beta}(\boldsymbol{\alpha}_{k+1},\boldsymbol{c},\boldsymbol{\gamma}_{k}), \tag{S-2} \label{S-2} \\
         \boldsymbol{\gamma}_{k+1}&:=\boldsymbol{\gamma}_{k}+\beta (\boldsymbol{\alpha}_{k+1}-\mathcal{A}_{K}^{2m}(\boldsymbol{c}_{k+1})^{2m-1}), \label{S-3} \tag{S-3} \\
         s_{k+1}&:=\sum_{n\in \mathbb{N}} \left(\boldsymbol{\Phi}_{n}^{T}\boldsymbol{c}_{k+1}\right)^{2m-1}\phi_{n}, \label{S-4} \tag{S-4}
       \end{align}
where $k\in \mathbb{N}_{0}$. Since (\ref{S-1}) only depends on $\boldsymbol{\alpha}$ and (\ref{S-2}) only depends on $\boldsymbol{c}$, combining the linear and quadratic terms, we equivalently transfer (\ref{S-1}) and (\ref{S-2}) to
      \begin{align}
         \boldsymbol{\alpha}_{k+1}&\in \mathop{\mathrm{argmin}}_{\boldsymbol{\alpha}\in \mathbb{R}^{N}}\ F(\boldsymbol{\alpha})+\frac{\beta}{2}\|\boldsymbol{\alpha}-\mathcal{A}_{K}^{2m}(\boldsymbol{c}_{k})^{2m-1}+\frac{1}{\beta}\boldsymbol{\gamma}_{k}\|^{2},\tag{S-1'}  \label{S-1'} \\
         \boldsymbol{c}_{k+1}&\in \mathop{\mathrm{argmin}}_{\boldsymbol{c}\in \mathbb{R}^{N}}\ G(\boldsymbol{c})+\frac{\beta}{2}\| \boldsymbol{\alpha}_{k+1}-\mathcal{A}_{K}^{2m}\boldsymbol{c}^{2m-1}+\frac{1}{\beta}\boldsymbol{\gamma}_{k}\|^{2} \tag{S-2'} \label{S-2'}.
      \end{align}
By definition, it is easy to check that (\ref{S-1'}) is lower semi-continuous and (\ref{S-2'}) is continuous. Moreover, (\ref{S-1'}) and (\ref{S-2'}) are coercive (see \cite[Definition 2.13]{Beck2017}), Weierstrass Theorem \cite[Theorem 2.14]{Beck2017} assures that (\ref{S-1'}) and (\ref{S-2'}) both have a solution. As a consequence, the splitting method above is well-defined and an infinite iterative sequence $\{(\boldsymbol{\alpha}_{k},\boldsymbol{c}_{k},\boldsymbol{\gamma}_{k},s_{k})\}$ is generated. Also, $\{s_{k}\}$ can be seen as an infinite iterative sequence to approximate the minimizer of Optimization (\ref{2.1}). 

    Next, we discuss how to solve subproblems (\ref{S-1'}) and (\ref{S-2'}). As for (\ref{S-1'}), since $F$ and $\|\cdot\|^{2}$ can be split of the variable into subvectors, we equivalently transfer an Optimization in $\mathbb{R}^{N}$ to some Optimizations in $\mathbb{R}$, that is, for $i=1,...,N$,
    \begin{equation}\tag{S-1''}\label{S-1''}
       (\boldsymbol{\alpha}_{k+1})_{i}\in \mathop{\mathrm{argmin}}_{\alpha_{i}}\ \frac{L(\boldsymbol{x}_{i},y_{i},\alpha_{i})}{N} +\frac{\beta}{2}\left(\alpha_{i}-(\mathcal{A}_{K}^{2m}(\boldsymbol{c}_{k})^{2m-1})_{i}+\frac{1}{\beta}(\boldsymbol{\gamma}_{k})_{i}\right)^{2}.
    \end{equation}
In other words, we solve Optimization (\ref{S-1'}) in $\mathbb{R}^{N}$ by breaking it into $N$ Optimizations (\ref{S-1''}) in $\mathbb{R}$, each of them easier to handle. For the general lower semi-continuous loss function $L$, the minimizer set of (\ref{S-1''}) may not be a singleton. In this case, we choose one of the elements in the minimizer set as $(\boldsymbol{\alpha}_{k+1})_{i},\ i=1,2,...,N$. 

    As for (\ref{S-2'}), it is easy to see that (\ref{S-2'}) is nonconvex and continuously differentiable. By definition of (\ref{S-2'}), (\ref{2.3}) and (\ref{2.4}), we have
    \begin{align*}
     &\nabla(G+\frac{\beta}{2}\| \boldsymbol{\alpha}_{k+1}-\mathcal{A}_{K}^{2m}(\cdot)^{2m-1}+\frac{1}{\beta}\boldsymbol{\gamma}_{k}\|^{2})(\boldsymbol{c}) \\
    =&2m\lambda \mathcal{A}_{K}^{2m}\boldsymbol{c}^{2m-1}-\beta(2m-1)\mathcal{A}_{K}^{2m}\boldsymbol{c}^{2m-2}\cdot(\boldsymbol{\alpha}_{k+1}-\mathcal{A}_{K}^{2m}\boldsymbol{c}^{2m-1}+\frac{1}{\beta}\boldsymbol{\gamma}_{k}) \notag \\
%    =&\mathcal{A}_{q_{m}}\boldsymbol{c}^{q_{m}-2}\cdot \left(q_{m}\lambda\boldsymbol{c}-\beta(q_{m}-1)(\boldsymbol{\alpha}_{k+1}-\mathcal{A}_{q_{m}}\boldsymbol{c}^{q_{m}-1}+\frac{1}{\beta}\boldsymbol{\gamma}_{k})\right) \notag \\
%    =&(q_{m}-1)\beta\mathcal{A}_{q_{m}}\boldsymbol{c}^{q_{m}-2}\cdot \left(\mathcal{A}_{q_{m}}\boldsymbol{c}^{q_{m}-1}+ \frac{\lambda q_{m}}{(q_{m}-1)\beta}\boldsymbol{c}-(\boldsymbol{\alpha}_{k+1}+\frac{1}{\beta}\boldsymbol{\gamma}_{k})\right) \notag \\
    =&\beta(2m-1)\mathcal{A}_{K}^{2m}\boldsymbol{c}^{2m-2}\cdot \left(\mathcal{A}_{K}^{2m}\boldsymbol{c}^{2m-1}+ \frac{2m\lambda }{(2m-1)\beta}\boldsymbol{c}-(\boldsymbol{\alpha}_{k+1}+\frac{1}{\beta}\boldsymbol{\gamma}_{k})\right) \notag.
    \end{align*}
Next we show that the solution of the following tensor equation
    \begin{equation}\tag{S-2''} \label{S-2''}
       \mathcal{A}_{K}^{2m}\boldsymbol{c}^{2m-1}+ \frac{2m\lambda}{(2m-1)\beta}\boldsymbol{c}=\boldsymbol{\alpha}_{k+1}+\frac{1}{\beta}\boldsymbol{\gamma}_{k}.
    \end{equation}    
is a minimizer of (\ref{S-2'}) by comparing the function values. We denote 
    $$
      H(\boldsymbol{c}):=\frac{1}{2m} \mathcal{A}_{K}^{2m}\boldsymbol{c}^{2m}+\frac{m\lambda}{(2m-1)\beta} \|\boldsymbol{c}\|^{2}-(\boldsymbol{\alpha}_{k+1}+\frac{1}{\beta}\boldsymbol{\gamma}_{k})^{T}\boldsymbol{c}. 
    $$
Then the solution of (\ref{S-2''}) is a stationary point of $H$. Also, (\ref{2.4}) shows that
    $$
       \nabla^{2} H(\boldsymbol{c})=(2m-1)\mathcal{A}_{K}^{2m}\boldsymbol{c}^{2m-2}+\frac{2m\lambda}{(2m-1)\beta} I,\ \forall \boldsymbol{c}\in \mathbb{R}^{N},
    $$
where $I$ denotes the identity matrix of $N$-dimensional. Since $\mathcal{A}_{K}^{2m}\boldsymbol{c}^{2m-2}$ is symmetric and positive definite, \cite[Proposition 1.1.10 (ii)]{Bertsekas2009} guarantees that $H$ is strictly convex on $\mathbb{R}^{N}$, which ensures that (\ref{S-2''}) has a unique solution $\bar{\boldsymbol{c}}$. By (\ref{S-2''}), (\ref{2.3}) and (\ref{2.4}), it follows that for any $\boldsymbol{c}\in \mathbb{R}^{N}$, 
    \begin{align*}
     &G(\boldsymbol{\boldsymbol{c}})+\tfrac{\beta}{2}\| \boldsymbol{\alpha}_{k+1}-\mathcal{A}_{K}^{2m}\boldsymbol{c}^{2m-1}+\tfrac{1}{\beta}\boldsymbol{\gamma}_{k}\|^{2}-G(\bar{\boldsymbol{c}})-\tfrac{\beta}{2}\| \boldsymbol{\alpha}_{k+1}-\mathcal{A}_{K}^{2m}\bar{\boldsymbol{c}}^{2m-1}+\tfrac{1}{\beta}\boldsymbol{\gamma}_{k}\|^{2} \\
   =&\lambda\mathcal{A}_{K}^{2m}\boldsymbol{c}^{2m}+\tfrac{\beta}{2}\| \mathcal{A}_{K}^{2m}\bar{\boldsymbol{c}}^{2m-1}+\tfrac{2m\lambda}{(2m-1)\beta}\bar{\boldsymbol{c}}-\mathcal{A}_{K}^{2m}\boldsymbol{c}^{2m-1}\|^{2}-\lambda\mathcal{A}_{K}^{2m}\bar{\boldsymbol{c}}^{2m}-\tfrac{\beta}{2}\| \tfrac{2m\lambda}{(2m-1)\beta}\bar{\boldsymbol{c}}\|^{2} \\
   =&\lambda\mathcal{A}_{K}^{2m}\boldsymbol{c}^{2m}+\tfrac{\lambda}{2m-1}\mathcal{A}_{K}^{2m}\bar{\boldsymbol{c}}^{2m}-\tfrac{2m\lambda}{2m-1}(\mathcal{A}_{K}^{2m}\boldsymbol{c}^{2m-1})^{T}\bar{\boldsymbol{c}}+\tfrac{\beta}{2}\| \mathcal{A}_{K}^{2m}\bar{\boldsymbol{c}}^{2m-1}-\mathcal{A}_{K}^{2m}\boldsymbol{c}^{2m-1}\|^{2}.
    \end{align*}
Since $\mathcal{A}_{K}^{2m}\boldsymbol{c}^{2m-2}$ is a positive definite matrix, \cite[Proposition 1.1.7 (a) and 1.1.10 (i)]{Bertsekas2009} and (\ref{2.4}) assure that $\boldsymbol{c}\mapsto \frac{\lambda}{2m-1}\mathcal{A}_{K}^{2m}\boldsymbol{c}^{2m}$ is convex and
    $$
    \frac{\lambda}{2m-1}\mathcal{A}_{K}^{2m}\bar{\boldsymbol{c}}^{2m}\geq \frac{\lambda}{2m-1}\mathcal{A}_{K}^{2m}\boldsymbol{c}^{2m}+(\frac{2m\lambda}{2m-1} \mathcal{A}_{K}^{2m}\boldsymbol{c}^{2m-1})^{T}(\bar{\boldsymbol{c}}-\boldsymbol{c}).
    $$
By rearranging terms, we see that
    $$
    \lambda\mathcal{A}_{K}^{2m}\boldsymbol{c}^{2m}+\frac{\lambda}{2m-1}\mathcal{A}_{K}^{2m}\bar{\boldsymbol{c}}^{2m}-\frac{2m\lambda}{2m-1}(\mathcal{A}_{K}^{2m}\boldsymbol{c}^{2m-1})^{T}\bar{\boldsymbol{c}}\geq 0.
    $$
Since $\frac{\beta}{2}\| \mathcal{A}_{K}^{2m}\bar{\boldsymbol{c}}^{2m-1}-\mathcal{A}_{K}^{2m}\boldsymbol{c}^{2m-1}\|^{2}\geq 0$, we conclude that for any $\boldsymbol{c}\in \mathbb{R}^{N}$, 
    $$
       G(\boldsymbol{\boldsymbol{c}})+\frac{\beta}{2}\| \boldsymbol{\alpha}_{k+1}-\mathcal{A}_{K}^{2m}\boldsymbol{c}^{2m-1}+\frac{1}{\beta}\boldsymbol{\gamma}_{k}\|^{2} \geq G(\bar{\boldsymbol{c}})+\frac{\beta}{2}\| \boldsymbol{\alpha}_{k+1}-\mathcal{A}_{K}^{2m}\bar{\boldsymbol{c}}^{2m-1}+\frac{1}{\beta}\boldsymbol{\gamma}_{k}\|^{2},
    $$
which ensures that $\boldsymbol{c}_{k+1}=\bar{\boldsymbol{c}}$. Next we consider to use Newton method for (\ref{S-2''}), whose convergence can be guaranteed by \cite[10.2.2 Newton Attraction Theorem]{Ortega1970}.
    
    When $\boldsymbol{\alpha}_{k+1}$ and $\boldsymbol{c}_{k+1}$ is acquired, we obtain $\boldsymbol{\gamma}_{k+1}$ by (\ref{S-3}). However, we have a simpler one that accomplishes the same goal. Substituting (\ref{S-3}) into (\ref{S-2''}), we have
    \begin{equation}\tag{S-3'} \label{S-3'}
        \boldsymbol{\gamma}_{k+1}=\frac{2m\lambda}{2m-1} \boldsymbol{c}_{k+1}.
    \end{equation}

    In conclusion, we present the following splitting method for Optimization (\ref{2.1}) with lower semi-continuous loss function below.
    
    \begin{algorithm}[H]
    \caption{\footnotesize Splitting Method for Optimization (\ref{2.1}) with Lower Semi-continuous Loss Function} 
    \begin{algorithmic}
    \footnotesize
      \STATE \textbf{input:} initial value $(\boldsymbol{\alpha}^{0},\boldsymbol{c}^{0}, \boldsymbol{\gamma}^{0})$, the training data $D$, lower semi-continuous loss function $L$, kernel $K$, $\lambda>0$, $\beta>0$, $m\in \mathbb{N}$ and stopping threshold $\varepsilon_{1},\varepsilon_{2}>0$.

      \STATE \textbf{for} $k=0,1,2,...$ \textbf{do}

      \qquad (1) Solve (\ref{S-1''}) and take the output as $\boldsymbol{\alpha}_{k+1}$.

      \qquad (2) \textbf{input:} initial value $\boldsymbol{c}_{k,0}=\boldsymbol{c}_{k}$

      \qquad \qquad \textbf{for} $j=0,1,2,...$ \textbf{do}

      \qquad \qquad \qquad (2-1) Solve the linear system 
      $$
         \nabla^{2} H(\boldsymbol{c}_{k,j})\cdot \boldsymbol{d}= -\nabla H(\boldsymbol{c}_{k,j})
      $$
        
      \qquad \qquad \qquad and take the solution $\boldsymbol{d}_{k,j}$.

      \qquad \qquad \qquad (2-2) Set $\boldsymbol{c}_{k,j+1}\leftarrow \boldsymbol{c}_{k,j}+\boldsymbol{d}_{k,j}$.

      \qquad \qquad \qquad \textbf{if} $\|\boldsymbol{d}_{k,j}\|<\varepsilon_{1}$ \textbf{then} stop.

      \qquad \qquad \textbf{end for}

      \qquad \qquad \textbf{output:} The approximate solution $\boldsymbol{c}_{k,j+1}$ as $\boldsymbol{c}_{k+1}$.

      \qquad (3) Set $\boldsymbol{\gamma}_{k+1}\leftarrow \frac{2m\lambda}{2m-1} \boldsymbol{c}_{k+1}$.

      \qquad (4) Set $s_{k+1}\leftarrow \sum\limits_{n\in \mathbb{N}}\left(\boldsymbol{\Phi}_{n}^{T}\boldsymbol{c}_{k+1}\right)^{2m-1}\phi_{n}$.

      \qquad \textbf{if} $\|\boldsymbol{\alpha}_{k+1}-\mathcal{A}_{K}^{2m}(\boldsymbol{c}_{k+1})^{2m-1}\|< \varepsilon_{2}$ \textbf{then} stop.

      \STATE \textbf{end for}

      \STATE \textbf{output:} The proposed solution is $s_{k+1}$ obtained after $k+1$ iterations.
    \end{algorithmic}
    \label{alg:ADMM}
    \end{algorithm}

    In the next section, we verify that under some mild assumptions, $\{s_{k}\}$ is globally convergent to a stationary point of Optimization (\ref{2.1}). Hence, it is better to solve Optimization (\ref{2.1}) repeatedly by selecting some initial values randomly and choosing the minimizer of these outputs as the approximate solution $s_{D}^{\frac{2m}{2m-1}}:X\to \mathbb{R}$ of Optimization (\ref{2.1}). At last, we build the SVM $\mathcal{R}s_{D}^{\frac{2m}{2m-1}}:X\rightarrow Y$ to make the prediction according to task requirement. 
    
    \section{Convergence Analysis}
    \label{sec:4}
    In this section, we investigate the convergence of $\{s_{k}\}$ inspired from the work \cite{Guo2016Convergence,Li2015Global} and use similar line of arguments therein. 

    \subsection{Assumptions}
    To ensure the convergence, we need the additional conditions of Optimization (\ref{2.1}) which we describe below. First, we denote the space consisting of all symmetric matrix of $N$-dimensional as $\mathbb{S}^{N\times N}$. It is easy to check that $\mathbb{S}^{N\times N}$ is a $\frac{N(N+1)}{2}$-dimensional space. For $\{\boldsymbol{\Phi}_{1},...,\boldsymbol{\Phi}_{n},...\}\subseteq \mathbb{R}^{N}$, we have that $\{\boldsymbol{\Phi}_{1}\boldsymbol{\Phi}_{1}^{T},...,\boldsymbol{\Phi}_{n}\boldsymbol{\Phi}_{n}^{T},...\} \subseteq \mathbb{S}^{N\times N}$. In conclusion, we have the following assumptions.

   \begin{assumption}\label{Assumption:4.1}
      For Optimization (\ref{2.1}), the following condition hold

      \text{\normalfont{(i)}} $L$ is a lower semi-continuous and subanalytic loss function.

      \text{\normalfont{(ii)}} For any $\boldsymbol{x}\in X$ and $y\in \mathbb{R}$, $L(\boldsymbol{x},y,\cdot)$ is continuous at $0$.

      \text{\normalfont{(iii)}} For any $\boldsymbol{x}\in X$ and $y\in \mathbb{R}$, $0$ is not the stationary point of $L(\boldsymbol{x},y,\cdot)$  in the sense of limiting subdifferential, that is, $0\notin \partial L(\boldsymbol{x},y,0)$. \text{\normalfont{(see \cite[Definition 8.3]{Rockafellar1998})}}

      \text{\normalfont{(iv)}} $\mathrm{rank}(\boldsymbol{\Phi}_{1}\boldsymbol{\Phi}_{1}^{T},...,\boldsymbol{\Phi}_{n}\boldsymbol{\Phi}_{n}^{T},...)=\frac{N(N+1)}{2}$.
   \end{assumption}

    Under Assumption \ref{Assumption:4.1}, we see that (i), (ii) and (iii) are the additional conditions of loss function and (iv) is the additional condition of RKBS. Subanalytic functions are quite wide, including semi-algebraic, analytic and semi-analytic functions (see \cite[6.6 Analytic Problems]{Finite}). More precisely, polynomial functions and piecewise polynomial functions are subanalytic functions. However, subanalyticity does not even imply continuity. Specially, some margin-based loss functions (see \cite[Section 2.3]{MR2450103}) satisfy Assumption \ref{Assumption:4.1} (i), (ii) and (iii), such as the least square loss, the Hinge loss, the truncated least squares loss, logistic loss and so on. For the given training data, we can find a suitable RKBS and verify Assumption \ref{Assumption:4.1} (iv). In numerical experiment, for all $n\in \mathbb{N}$, since $\boldsymbol{\Phi}_{n}\boldsymbol{\Phi}_{n}^{T}$ is symmetric, we can take the upper triangle part and vectorize them as row vectors. If we find a $N(N+1)/2\times N$-dimensional  nonsingular submatrix consisting of these row vectors, then we can verify the Assumption \ref{Assumption:4.1} (iv) to ensure the convergence before using Algorithm \ref{alg:ADMM}.  

    In the rest of this subsection, we discuss what conclusions can be drawn under Assumption \ref{Assumption:4.1}. By definition of $\mathcal{L}_{\beta}$ and Assumption \ref{Assumption:4.1} (i), we check that $\mathcal{L}_{\beta}$ is lower semi-continuous. Since $F$, $G$ and $\frac{\beta}{2}\|\boldsymbol{\alpha}-\mathcal{A}_{K}^{2m}\boldsymbol{c}^{2m-1}\|^{2}$ are nonnegative and subanalytic and $\boldsymbol{\gamma}^{T}(\boldsymbol{\alpha}-\mathcal{A}_{K}^{2m}\boldsymbol{c}^{2m-1})$ is subanalytic and bounded for any bounded set in $\mathbb{R}^{3N}$, \cite[(I.2.1.9)]{1997Geometry} shows that $\mathcal{L}_{\beta}$ is subanalytic. Moreover, \cite{Bolte2007,Bolte2,Xu2013} assures that $\mathcal{L}_{\beta}$ is a KL function on $\mathbb{R}^{3N}$, that is, $\mathcal{L}_{\beta}$ has KL property at each point in $\mathbb{R}^{3N}$ (see \cite[Section 2.4]{Bolte2014Proximal}). The KL property of $\mathcal{L}_{\beta}$ plays a cruical role in estimating the error bounds of the iterative sequence.

    Next we verify that $(\boldsymbol{0},\boldsymbol{0})$ is not the subsequential limit of $\{(\boldsymbol{c}_{k},\boldsymbol{c}_{k+1})\}$ under Assumption \ref{Assumption:4.1} (ii) and (iii). Assume that there exists a subsequence $\{\boldsymbol{c}_{k_{j}}\}$ such that $(\boldsymbol{c}_{k_{j}},\boldsymbol{c}_{k_{j}+1})\to (\boldsymbol{0},\boldsymbol{0})$. From (\ref{S-3}) and (\ref{S-3'}), we also have that $\boldsymbol{\gamma}_{k_{j}}\to \boldsymbol{0}$, $\boldsymbol{\gamma}_{k_{j}+1}\to \boldsymbol{0}$ and $\boldsymbol{\alpha}_{k_{j}+1}\to \boldsymbol{0}$. Moreover, from the optimality condition of (\ref{S-1}), the iterates generated satisfy
    $$
      \boldsymbol{0}\in \partial F(\boldsymbol{\alpha}_{k_{j}+1})+\boldsymbol{\gamma}_{k_{j}}+\beta(\boldsymbol{\alpha}_{k_{j}+1}-\mathcal{A}_{K}^{2m}(\boldsymbol{c}_{k_{j}})^{2m-1}).
    $$
By Assumption \ref{Assumption:4.1} (ii), we have that $F$ is continuous at $\boldsymbol{0}$, which ensures that $F(\boldsymbol{\alpha}_{k_{j}+1})\to F(\boldsymbol{0})$. Therefore, passing to the limit along $\{(\boldsymbol{\alpha}_{k_{j}},\boldsymbol{c}_{k_{j}},\boldsymbol{\gamma}_{k_{j}})\}$, \cite[proposition 8.7]{Rockafellar1998} shows that
    $$
       \boldsymbol{0}\in \partial F(\boldsymbol{0}).
    $$
Since $F$ can be split of the variable into subvectors, Assumption \ref{Assumption:4.1} (iii) and \cite[proposition 10.5]{Rockafellar1998} and  \cite[D. Rescaling]{Rockafellar1998} assure that
    $$
       \boldsymbol{0}\notin \partial F(\boldsymbol{0})=\left(\frac{1}{N}\partial L(\boldsymbol{x}_{1},y_{1},0)\right)\times\cdots \times \left(\frac{1}{N}\partial L(\boldsymbol{x}_{N},y_{N},0)\right).
    $$
Clearly, the two relations above are contradiction. Hence, $(\boldsymbol{0},\boldsymbol{0})$ is not the subsequential limit of $\{(\boldsymbol{c}_{k},\boldsymbol{c}_{k+1})\}$, which means that there exists a neighbourhood of $(\boldsymbol{0},\boldsymbol{0})$ and $k_{0}\in \mathbb{N}$ such that the neighbourhood does not have any point of $\{(\boldsymbol{c}_{k},\boldsymbol{c}_{k+1})\}$ when $k>k_{0}$. Furthermore, we observe that if $\{\boldsymbol{c}_{k}\}$ is convergent, then $\{\boldsymbol{c}_{k}\}$ can not converge to $\boldsymbol{0}$.  

    Under Assumption \ref{Assumption:4.1} (iv), we obtain the symmetry and strictly positive definiteness of $\mathcal{A}_{K}^{2m}\boldsymbol{c}^{2m-2}$ for any $\boldsymbol{c}\neq \boldsymbol{0}$. From Subsection \ref{sec:2.3}, $\mathcal{A}_{K}^{2m}\boldsymbol{c}^{2m-2}$ is symmetric and positive definite. If $\boldsymbol{d}^{T}\left(\mathcal{A}_{K}^{2m}\boldsymbol{c}^{2m-2}\right)\boldsymbol{d}=0$, then for any $n\in \mathbb{N}$, $(\boldsymbol{\Phi}_{n}^{T}\boldsymbol{c})(\boldsymbol{\Phi}_{n}^{T}\boldsymbol{d})=(\boldsymbol{\Phi}_{n}\boldsymbol{\Phi}_{n}^{T})\cdot(\boldsymbol{c}\boldsymbol{d}^{T})=0$. By Assumption \ref{Assumption:4.1} (iv), the following system
    $$
    (\boldsymbol{\Phi}_{1}\boldsymbol{\Phi}_{1}^{T},...,\boldsymbol{\Phi}_{n}\boldsymbol{\Phi}_{n}^{T},...)^{T}\cdot (\boldsymbol{c}\boldsymbol{d}^{T})=(0,...,0,...)^{T}
    $$
only has zero solution, that is, $\boldsymbol{c}\boldsymbol{d}^{T}=O$, where $O$ denotes zero matrix. Since $\boldsymbol{c}\neq \boldsymbol{0}$, there exists at least one $c_{i}\neq 0$. Since $c_{i}d_{j}=0,\ j=1,2,...,N$, it is easy to check that $d_{1}=d_{2}=...=d_{N}=0$, that is, $\boldsymbol{d}=\boldsymbol{0}$. Hence, for any $\boldsymbol{c}\neq \boldsymbol{0}$ and $\boldsymbol{d}\neq \boldsymbol{0}$, we see that
    $$
       \boldsymbol{d}^{T}\left(\mathcal{A}_{K}^{2m}\boldsymbol{c}^{2m-2}\right)\boldsymbol{d}>0.
    $$
that is, $\mathcal{A}_{K}^{2m}\boldsymbol{c}^{2m-2}$ is symmetric and strictly positive definite when $\boldsymbol{c}\neq\boldsymbol{0}$. In particular, $\mathcal{A}_{K}^{2}$ is a symmetric and strictly positive definite matrix. Notably, $\mathcal{A}_{K}^{2m}\boldsymbol{0}^{2m-2}$ is a zero matrix of $N$-dimensional.

    Moreover, by definition, it is easy to check that $\Psi(f)=\sum\limits_{n\in \mathbb{N}}\left(\boldsymbol{\Phi}_{n}^{T}\boldsymbol{c}\right)\phi_{n}\mapsto \boldsymbol{c}$ is a linear operator from $\mathrm{span}\{K(\boldsymbol{x}_{1},\cdot),...,K(\boldsymbol{x}_{N},\cdot)\}$ to $\mathbb{R}^{N}$. For any $f\in \Delta_{K}^{\frac{2m}{2m-1}}(X)$, we assume that there exists $\boldsymbol{c}, \boldsymbol{d}\in \mathbb{R}^{N}$ such that
    $$
    \Psi(f)=\sum\limits_{n\in \mathbb{N}}\left(\boldsymbol{\Phi}_{n}^{T}\boldsymbol{c}\right)\phi_{n}=\sum\limits_{n\in \mathbb{N}}\left(\boldsymbol{\Phi}_{n}^{T}\boldsymbol{d}\right)\phi_{n}.
    $$
Hence, $\sum\limits_{n\in \mathbb{N}}\left(\boldsymbol{\Phi}_{n}^{T}(\boldsymbol{c}-\boldsymbol{d})\right)\phi_{n}=0
$, that is, 
    $$
      (\boldsymbol{\Phi}_{1},...,\boldsymbol{\Phi}_{N},...)^{T}(\boldsymbol{c}-\boldsymbol{d})=(0,...,0,...)^{T}.
    $$
Since $\mathcal{A}_{K}^{2}$ is symmetric and strictly positive definite, $\mathrm{rank}(\mathcal{A}_{K}^{2})=N$. As 
    $$
    \mathrm{rank}(\mathcal{A}_{K}^{2})=\mathrm{rank}((\boldsymbol{\Phi}_{1},...,\boldsymbol{\Phi}_{n},...)(\boldsymbol{\Phi}_{1},...,\boldsymbol{\Phi}_{n},...)^{T})\leq \mathrm{rank}((\boldsymbol{\Phi}_{1},...,\boldsymbol{\Phi}_{n},...))\leq N,
    $$
one can see $\mathrm{rank}((\boldsymbol{\Phi}_{1},...,\boldsymbol{\Phi}_{n},...))=N$ and we conclude that $\boldsymbol{c}=\boldsymbol{d}$. Thus, $\Psi(f)\mapsto \boldsymbol{c}$ is a linear operator from $\mathrm{span}\{K(\boldsymbol{x}_{1},\cdot),...,K(\boldsymbol{x}_{N},\cdot)\}$ onto $\mathbb{R}^{N}$ and thus an isomorphism by \cite[1.4.15 Theorem]{Megginson1998}.

    \subsection{Convergence of Splitting Method}
    In this subsection, based on the Assumption  \ref{Assumption:4.1}, we introduce the convergence of $\{s_{k}\}$. 

    \begin{theorem}\label{Theorem:4.2}
       Suppose that Assumption \ref{Assumption:4.1} holds. There exists $\beta_{min}>0$ such that $\{s_{k}\}$ converges to a stationary point of Optimization (\ref{2.1}) when $\beta>\beta_{min}$.
    \end{theorem}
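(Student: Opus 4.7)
The plan is to follow the now-standard three-step recipe for convergence of nonconvex ADMM driven by a KL objective, in the style of \cite{Guo2016Convergence,Li2015Global}: establish (a) a sufficient-descent inequality for $\mathcal{L}_\beta$ along the iterates, (b) a relative-error bound $\mathrm{dist}(\boldsymbol{0},\partial \mathcal{L}_\beta(\boldsymbol{\alpha}_{k+1},\boldsymbol{c}_{k+1},\boldsymbol{\gamma}_{k+1}))\le C\bigl(\|\boldsymbol{c}_{k+1}-\boldsymbol{c}_k\|+\|\boldsymbol{\alpha}_{k+1}-\boldsymbol{\alpha}_k\|\bigr)$, and (c) boundedness of the full trajectory; then invoke the KL property of $\mathcal{L}_\beta$, which has already been verified in the paper under Assumption~\ref{Assumption:4.1}(i), to obtain finite length of $\{(\boldsymbol{\alpha}_k,\boldsymbol{c}_k,\boldsymbol{\gamma}_k)\}$ and convergence to a stationary point of $\mathcal{L}_\beta$; and finally transport the convergence from $\mathbb{R}^N$ into $\mathcal{B}_K^{\frac{2m}{2m-1}}(X)$ via the linear isomorphism between $\mathrm{span}\{K(\boldsymbol{x}_{1},\cdot),\ldots,K(\boldsymbol{x}_{N},\cdot)\}$ and $\mathbb{R}^N$ established in Subsection~4.1.

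\textbf{Key descent estimate.} The decisive ingredient is (a), because it both selects $\beta_{\min}$ and bootstraps boundedness. The $\boldsymbol{\alpha}$-step (\ref{S-1}) is minimisation in $\boldsymbol{\alpha}$ alone, hence non-increasing. For the $\boldsymbol{c}$-step I will exploit the Hessian already computed in Section~\ref{sec:3},
\[
\nabla^{2} H(\boldsymbol{c})=(2m-1)\mathcal{A}_{K}^{2m}\boldsymbol{c}^{2m-2}+\tfrac{2m\lambda}{(2m-1)\beta}I.
\]
Assumption~\ref{Assumption:4.1}(iv) promotes $\mathcal{A}_{K}^{2m}\boldsymbol{c}^{2m-2}$ to \emph{strict} positive definiteness whenever $\boldsymbol{c}\neq \boldsymbol{0}$, while the analysis already carried out in Subsection~4.1 under Assumption~\ref{Assumption:4.1}(ii)--(iii) prevents $\{\boldsymbol{c}_k\}$ from clustering at $\boldsymbol{0}$; restricting to a bounded subset of $\mathbb{R}^N$ bounded away from $\boldsymbol{0}$ therefore yields a uniform strong-convexity modulus $\sigma>0$ for $H$, so
\[
\mathcal{L}_\beta(\boldsymbol{\alpha}_{k+1},\boldsymbol{c}_k,\boldsymbol{\gamma}_k)-\mathcal{L}_\beta(\boldsymbol{\alpha}_{k+1},\boldsymbol{c}_{k+1},\boldsymbol{\gamma}_k)\geq \tfrac{\sigma}{2}\|\boldsymbol{c}_{k+1}-\boldsymbol{c}_k\|^2.
\]
Using the explicit identification (\ref{S-3'}), the dual ascent equals $\tfrac{1}{\beta}\|\boldsymbol{\gamma}_{k+1}-\boldsymbol{\gamma}_k\|^{2}=\tfrac{1}{\beta}\bigl(\tfrac{2m\lambda}{2m-1}\bigr)^{2}\|\boldsymbol{c}_{k+1}-\boldsymbol{c}_k\|^{2}$. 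Choosing $\beta_{\min}$ so that $\tfrac{\sigma}{2}>\tfrac{1}{\beta_{\min}}\bigl(\tfrac{2m\lambda}{2m-1}\bigr)^{2}$ produces a net descent $\mathcal{L}_\beta(\boldsymbol{\alpha}_{k+1},\boldsymbol{c}_{k+1},\boldsymbol{\gamma}_{k+1})\le \mathcal{L}_\beta(\boldsymbol{\alpha}_k,\boldsymbol{c}_k,\boldsymbol{\gamma}_k)-\rho\|\boldsymbol{c}_{k+1}-\boldsymbol{c}_k\|^{2}$ for some $\rho>0$.

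\textbf{From descent to convergence.} A standard completion of squares shows $\mathcal{L}_\beta(\boldsymbol{\alpha},\boldsymbol{c},\boldsymbol{\gamma})\ge F(\boldsymbol{\alpha})+G(\boldsymbol{c})-\tfrac{1}{2\beta}\|\boldsymbol{\gamma}\|^{2}$; combined with (\ref{S-3'}) and the coercivity of $G$ forced by Assumption~\ref{Assumption:4.1}(iv), this lower-bounds $\mathcal{L}_\beta$ along the iterates for $\beta$ large enough, and telescoping the descent then yields $\sum_{k}\|\boldsymbol{c}_{k+1}-\boldsymbol{c}_k\|^{2}<\infty$. Hence $\boldsymbol{\gamma}_{k+1}-\boldsymbol{\gamma}_k\to \boldsymbol{0}$ by (\ref{S-3'}), and the primal feasibility residual $\boldsymbol{\alpha}_{k+1}-\mathcal{A}_K^{2m}(\boldsymbol{c}_{k+1})^{2m-1}\to \boldsymbol{0}$ by (\ref{S-3}); boundedness of $\{\boldsymbol{c}_k\}$ follows from monotone decrease and coercivity, and then $\{\boldsymbol{\gamma}_k\},\{\boldsymbol{\alpha}_k\}$ inherit boundedness from (\ref{S-3'})--(\ref{S-3}). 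The subgradient bound (b) is extracted from the optimality conditions of (\ref{S-1'})--(\ref{S-2'}) together with the local Lipschitzness of $\nabla G$ and of $\boldsymbol{c}\mapsto\mathcal{A}_K^{2m}\boldsymbol{c}^{2m-1}$ on the bounded region. Feeding (a), (b), (c) together with the KL property into the template of \cite{Guo2016Convergence,Li2015Global} gives $\sum_{k}\|(\boldsymbol{\alpha}_{k+1},\boldsymbol{c}_{k+1},\boldsymbol{\gamma}_{k+1})-(\boldsymbol{\alpha}_k,\boldsymbol{c}_k,\boldsymbol{\gamma}_k)\|<\infty$, so the whole sequence converges to some $(\boldsymbol{\alpha}^{*},\boldsymbol{c}^{*},\boldsymbol{\gamma}^{*})$ satisfying $\boldsymbol{0}\in \partial\mathcal{L}_\beta(\boldsymbol{\alpha}^{*},\boldsymbol{c}^{*},\boldsymbol{\gamma}^{*})$ and $\boldsymbol{\alpha}^{*}=\mathcal{A}_K^{2m}(\boldsymbol{c}^{*})^{2m-1}$. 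Continuity of $\boldsymbol{c}\mapsto \sum_{n\in\mathbb{N}}(\boldsymbol{\Phi}_{n}^{T}\boldsymbol{c})^{2m-1}\phi_{n}$ through the isomorphism of Subsection~4.1 then delivers $s_{k}\to s^{*}:=\sum_{n\in\mathbb{N}}(\boldsymbol{\Phi}_{n}^{T}\boldsymbol{c}^{*})^{2m-1}\phi_{n}$ in $\mathcal{B}_K^{\frac{2m}{2m-1}}(X)$, and the equivalence of Optimizations~(\ref{2.1}) and (\ref{2.5}) translates the KKT-like conditions for $\boldsymbol{c}^{*}$ into the statement that $s^{*}$ is a stationary point of Optimization~(\ref{2.1}).

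\textbf{Main obstacle.} The principal difficulty is the \emph{nonlinear} coupling constraint $\boldsymbol{\alpha}=\mathcal{A}_K^{2m}\boldsymbol{c}^{2m-1}$, which invalidates the affine-constraint dual-ascent estimate that is the workhorse of classical nonconvex-ADMM convergence proofs. This is precisely where the paper-specific ingredients pay off: the identity (\ref{S-3'}) substitutes an exact algebraic bound for the Lipschitz estimate on $\nabla F$ (which is unavailable because $F$ is merely lower semi-continuous), while Assumption~\ref{Assumption:4.1}(ii)--(iv) supplies the uniform strong convexity of $H$ needed to absorb the dual ascent. Making this trade-off quantitative, and thereby pinning down the explicit threshold $\beta_{\min}$, is the technical heart of the argument.
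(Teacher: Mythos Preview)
Your three-step plan (sufficient descent, relative-error bound, boundedness, then KL) matches the paper's architecture, and your use of the identity (\ref{S-3'}) to control the dual ascent without any Lipschitz hypothesis on $\nabla F$ is precisely the paper's key device. However, your justification of the descent step for the $\boldsymbol{c}$-subproblem contains a genuine gap. You invoke the Hessian of $H$ to obtain a strong-convexity modulus $\sigma$ and conclude
\[
\mathcal{L}_\beta(\boldsymbol{\alpha}_{k+1},\boldsymbol{c}_k,\boldsymbol{\gamma}_k)-\mathcal{L}_\beta(\boldsymbol{\alpha}_{k+1},\boldsymbol{c}_{k+1},\boldsymbol{\gamma}_k)\geq \tfrac{\sigma}{2}\|\boldsymbol{c}_{k+1}-\boldsymbol{c}_k\|^{2}.
\]
But $H$ is \emph{not} the objective of the $\boldsymbol{c}$-subproblem: the function minimised in (\ref{S-2'}) is $\boldsymbol{c}\mapsto G(\boldsymbol{c})+\tfrac{\beta}{2}\|\boldsymbol{\alpha}_{k+1}-\mathcal{A}_K^{2m}\boldsymbol{c}^{2m-1}+\tfrac{1}{\beta}\boldsymbol{\gamma}_k\|^{2}$, and for $m>1$ this is \emph{nonconvex}, since the quadratic penalty is composed with the nonlinear map $\boldsymbol{c}\mapsto\mathcal{A}_K^{2m}\boldsymbol{c}^{2m-1}$. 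Section~\ref{sec:3} only shows that its minimiser coincides with that of $H$; strong convexity of $H$ yields descent in $H$, not in $\mathcal{L}_\beta$, so the displayed inequality is a non-sequitur.

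The paper closes this gap by a direct algebraic computation (Lemma~\ref{Lemma:4.3}). Using (\ref{S-3}) and (\ref{S-3'}) it rewrites the $\boldsymbol{c}$-step descent as a sum of a term shown nonnegative via convexity of $\boldsymbol{c}\mapsto\mathcal{A}_K^{2m}\boldsymbol{c}^{2m}$, plus $\tfrac{\beta}{2}\|\mathcal{A}_K^{2m}(\boldsymbol{c}_k)^{2m-1}-\mathcal{A}_K^{2m}(\boldsymbol{c}_{k+1})^{2m-1}\|^{2}$; the latter is then bounded below by $\tfrac{\beta\nu^{2}}{2}\|\boldsymbol{c}_k-\boldsymbol{c}_{k+1}\|^{2}$ via the factorisation $a^{2m-1}-b^{2m-1}=(a-b)\sum_{j}a^{j}b^{2m-2-j}$ together with the strict positive definiteness of $\mathcal{A}_K^{2m}(\boldsymbol{c}_k)^{2m-2}+\mathcal{A}_K^{2m}(\boldsymbol{c}_{k+1})^{2m-2}$ once $\{(\boldsymbol{c}_k,\boldsymbol{c}_{k+1})\}$ is bounded away from $(\boldsymbol{0},\boldsymbol{0})$. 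This produces a descent constant proportional to $\beta$, against which the dual ascent $\tfrac{1}{\beta}\bigl(\tfrac{2m\lambda}{2m-1}\bigr)^{2}\|\boldsymbol{c}_{k+1}-\boldsymbol{c}_k\|^{2}$ can be absorbed for $\beta$ large, yielding an explicit $\beta_{\min}$. Once this computation replaces your strong-convexity shortcut, the remainder of your outline (boundedness via coercivity, the subgradient bound from the optimality conditions, constancy of $\mathcal{L}_\beta$ on the cluster set, the KL argument, and the final transport to $\mathcal{B}_K^{\frac{2m}{2m-1}}(X)$) goes through essentially as you describe and as the paper carries out.
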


    For convenience, we will abbreviate $\Psi_{K}^{\frac{2m}{2m-1}}$ as $\Psi$ for simplicity in this subsection. The main idea of the proof is to use the property of $\{\mathcal{L}_{\beta}(\boldsymbol{\alpha}_{k},\boldsymbol{c}_{k},\boldsymbol{\gamma}_{k})\}$ to deduce the convergence of $\left\{\Psi(s_{k})\right\}$. Since $\Psi$ is a homeomorphism, we verify the convergence of $\{s^{k}\}$. Before presenting our main result in this section, we introduce a useful inequality which plays a cruical role in estimating the error bounds of $\left\{\Psi(s_{k})\right\}$.

    \begin{lemma}\label{Lemma:4.3} 
    Suppose that Assumption \ref{Assumption:4.1} holds. There exists $\beta_{min}>0$, $\zeta_{1}>0$ and $k_{0}\in \mathbb{N}$ such that when $\beta>\beta_{min}$ and $k>k_{0}$, the following descent inequality holds 
    $$
    \zeta_{1}\|\Psi(s_{k})-\Psi(s_{k+1})\|_{\mathcal{B}_{K}^{2m}(X)}^{2}\leq \mathcal{L}_{\beta}(\boldsymbol{\alpha}_{k},\boldsymbol{c}_{k},\boldsymbol{\gamma}_{k})-\mathcal{L}_{\beta}(\boldsymbol{\alpha}_{k+1},\boldsymbol{c}_{k+1},\boldsymbol{\gamma}_{k+1}).
    $$
    \end{lemma}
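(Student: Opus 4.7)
The plan is to lower-bound $\mathcal{L}_\beta(\boldsymbol{\alpha}_k,\boldsymbol{c}_k,\boldsymbol{\gamma}_k) - \mathcal{L}_\beta(\boldsymbol{\alpha}_{k+1},\boldsymbol{c}_{k+1},\boldsymbol{\gamma}_{k+1})$ by a positive multiple of $\|\boldsymbol{c}_k-\boldsymbol{c}_{k+1}\|^{2}$ and then convert to the $\mathcal{B}_K^{2m}$-norm via the isomorphism established at the end of the preceding subsection. Starting from the standard three-step telescoping
\begin{align*}
\mathcal{L}_\beta(\boldsymbol{\alpha}_k,\boldsymbol{c}_k,\boldsymbol{\gamma}_k) - \mathcal{L}_\beta(\boldsymbol{\alpha}_{k+1},\boldsymbol{c}_{k+1},\boldsymbol{\gamma}_{k+1})
=& \bigl[\mathcal{L}_\beta(\boldsymbol{\alpha}_k,\boldsymbol{c}_k,\boldsymbol{\gamma}_k)-\mathcal{L}_\beta(\boldsymbol{\alpha}_{k+1},\boldsymbol{c}_k,\boldsymbol{\gamma}_k)\bigr] \\
&+ \bigl[\mathcal{L}_\beta(\boldsymbol{\alpha}_{k+1},\boldsymbol{c}_k,\boldsymbol{\gamma}_k)-\mathcal{L}_\beta(\boldsymbol{\alpha}_{k+1},\boldsymbol{c}_{k+1},\boldsymbol{\gamma}_k)\bigr] \\
&+ \bigl[\mathcal{L}_\beta(\boldsymbol{\alpha}_{k+1},\boldsymbol{c}_{k+1},\boldsymbol{\gamma}_k)-\mathcal{L}_\beta(\boldsymbol{\alpha}_{k+1},\boldsymbol{c}_{k+1},\boldsymbol{\gamma}_{k+1})\bigr],
\end{align*}
the first bracket is nonnegative by the optimality of $\boldsymbol{\alpha}_{k+1}$ in (\ref{S-1'}), while the third bracket evaluates to $-\tfrac{1}{\beta}\|\boldsymbol{\gamma}_{k+1}-\boldsymbol{\gamma}_k\|^{2}$ by (\ref{S-3}). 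Substituting (\ref{S-3'}) for $k\geq 1$ rewrites this as $-\tfrac{1}{\beta}\bigl(\tfrac{2m\lambda}{2m-1}\bigr)^{2}\|\boldsymbol{c}_{k+1}-\boldsymbol{c}_k\|^{2}$, which can be made arbitrarily small by increasing $\beta$.

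The second bracket equals $\mathcal{F}(\boldsymbol{c}_k)-\mathcal{F}(\boldsymbol{c}_{k+1})$ for $\mathcal{F}(\boldsymbol{c}):=G(\boldsymbol{c})+\tfrac{\beta}{2}\|\boldsymbol{\alpha}_{k+1}-\mathcal{A}_K^{2m}\boldsymbol{c}^{2m-1}+\tfrac{1}{\beta}\boldsymbol{\gamma}_k\|^{2}$ and is the analytical heart of the argument. I would sharpen the qualitative identity established just before the statement of this lemma using the integral-form Taylor expansion of $\boldsymbol{c}\mapsto\tfrac{\lambda}{2m-1}\mathcal{A}_K^{2m}\boldsymbol{c}^{2m}$, whose Hessian is $2m\lambda\,\mathcal{A}_K^{2m}\boldsymbol{c}^{2m-2}$, to obtain
$$
\mathcal{F}(\boldsymbol{c}_k)-\mathcal{F}(\boldsymbol{c}_{k+1}) \geq 2m\lambda \int_0^{1}(1-t)(\boldsymbol{c}_k-\boldsymbol{c}_{k+1})^{T}\mathcal{A}_K^{2m}\bigl(\boldsymbol{c}_{k+1}+t(\boldsymbol{c}_k-\boldsymbol{c}_{k+1})\bigr)^{2m-2}(\boldsymbol{c}_k-\boldsymbol{c}_{k+1})\,dt + \tfrac{\beta}{2}\bigl\|\mathcal{A}_K^{2m}\boldsymbol{c}_k^{2m-1}-\mathcal{A}_K^{2m}\boldsymbol{c}_{k+1}^{2m-1}\bigr\|^{2}.
$$
By the subsequential-limit analysis from the preceding subsection, $(\boldsymbol{c}_k,\boldsymbol{c}_{k+1})$ stays out of a neighbourhood of $(\boldsymbol{0},\boldsymbol{0})$ for $k>k_0$; coupled with boundedness of $\{\boldsymbol{c}_k\}$ (which follows from the coercivity of $G$ once a preliminary monotonicity of $\{\mathcal{L}_\beta(\boldsymbol{\alpha}_k,\boldsymbol{c}_k,\boldsymbol{\gamma}_k)\}$ is in hand) and the strict positive definiteness of $\mathcal{A}_K^{2m}\boldsymbol{c}^{2m-2}$ for $\boldsymbol{c}\neq\boldsymbol{0}$ under Assumption \ref{Assumption:4.1}(iv), a continuity-compactness argument yields $\mathcal{F}(\boldsymbol{c}_k)-\mathcal{F}(\boldsymbol{c}_{k+1})\geq\eta(\beta)\|\boldsymbol{c}_k-\boldsymbol{c}_{k+1}\|^{2}$ with $\eta(\beta)$ bounded below by a positive constant (and in fact growing linearly in $\beta$ via the second summand). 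Choosing $\beta_{min}$ so that $\eta(\beta)>\tfrac{1}{\beta}\bigl(\tfrac{2m\lambda}{2m-1}\bigr)^{2}$ for all $\beta>\beta_{min}$ then delivers a net descent $\geq\zeta_1'\,\|\boldsymbol{c}_k-\boldsymbol{c}_{k+1}\|^{2}$.

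To finish, I use the linear isomorphism $\mathbb{R}^{N}\ni\boldsymbol{c}\mapsto\sum_{n\in\mathbb{N}}(\boldsymbol{\Phi}_n^{T}\boldsymbol{c})\phi_n\in\mathrm{span}\{K(\boldsymbol{x}_1,\cdot),\ldots,K(\boldsymbol{x}_N,\cdot)\}$ derived in the preceding subsection from Assumption \ref{Assumption:4.1}(iv), under which $\Psi(s_k)$ is the image of $\boldsymbol{c}_k$; by norm equivalence on this finite-dimensional space there is a constant $c_2>0$ with $\|\Psi(s_k)-\Psi(s_{k+1})\|_{\mathcal{B}_K^{2m}(X)}^{2}\leq c_2^{2}\|\boldsymbol{c}_k-\boldsymbol{c}_{k+1}\|^{2}$, and setting $\zeta_1:=\zeta_1'/c_2^{2}$ completes the proof.

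The main obstacle will be the quadratic lower bound $\mathcal{F}(\boldsymbol{c}_k)-\mathcal{F}(\boldsymbol{c}_{k+1})\geq\eta(\beta)\|\boldsymbol{c}_k-\boldsymbol{c}_{k+1}\|^{2}$. Although the qualitative nonnegativity derived earlier in the paper follows from strict convexity of $\mathcal{A}_K^{2m}\boldsymbol{c}^{2m}$, promoting it to a uniform quadratic estimate is delicate because $\mathcal{A}_K^{2m}\boldsymbol{c}^{2m-2}$ degenerates at $\boldsymbol{c}=\boldsymbol{0}$, and the straight segment from $\boldsymbol{c}_k$ to $\boldsymbol{c}_{k+1}$ can in principle pass arbitrarily close to the origin even when the endpoints themselves are bounded away from it. The resolution will likely require splicing the integral estimate on the large-norm portion of the segment with a separate estimate (using the complementary $\|\mathcal{A}_K^{2m}\boldsymbol{c}_k^{2m-1}-\mathcal{A}_K^{2m}\boldsymbol{c}_{k+1}^{2m-1}\|^{2}$ term, together with the fact that $\boldsymbol{c}\mapsto\mathcal{A}_K^{2m}\boldsymbol{c}^{2m-1}$ is a local diffeomorphism away from zero) on the remaining small-norm portion.
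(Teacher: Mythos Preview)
Your three-bracket telescoping, the handling of the first bracket via optimality of $\boldsymbol{\alpha}_{k+1}$, and the handling of the third bracket via (\ref{S-3}) and (\ref{S-3'}) all match the paper exactly. The divergence is entirely in the second bracket, and the obstacle you flag at the end is real---but the paper sidesteps it rather than confronting it.

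The paper does not try to extract a quadratic lower bound from the integral term at all. It keeps only the qualitative nonnegativity of
\[
\lambda\mathcal{A}_K^{2m}\boldsymbol{c}_k^{2m}+\tfrac{\lambda}{2m-1}\mathcal{A}_K^{2m}\boldsymbol{c}_{k+1}^{2m}-\tfrac{2m\lambda}{2m-1}(\mathcal{A}_K^{2m}\boldsymbol{c}_k^{2m-1})^{T}\boldsymbol{c}_{k+1}\ \ge\ 0
\]
(your integral, discarded) and places all the quantitative work on the complementary term $\tfrac{\beta}{2}\|\mathcal{A}_K^{2m}\boldsymbol{c}_k^{2m-1}-\mathcal{A}_K^{2m}\boldsymbol{c}_{k+1}^{2m-1}\|^{2}$. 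The key step is the monotonicity-type estimate
\[
(\boldsymbol{c}_k-\boldsymbol{c}_{k+1})^{T}\bigl(\mathcal{A}_K^{2m}\boldsymbol{c}_k^{2m-1}-\mathcal{A}_K^{2m}\boldsymbol{c}_{k+1}^{2m-1}\bigr)
\ \ge\ \tfrac12(\boldsymbol{c}_k-\boldsymbol{c}_{k+1})^{T}\bigl(\mathcal{A}_K^{2m}\boldsymbol{c}_k^{2m-2}+\mathcal{A}_K^{2m}\boldsymbol{c}_{k+1}^{2m-2}\bigr)(\boldsymbol{c}_k-\boldsymbol{c}_{k+1}),
\]
obtained by applying the factorisation $a^{2m-1}-b^{2m-1}=(a-b)\sum_{j=0}^{2m-2}a^{2m-2-j}b^{j}$ to $a=\boldsymbol{\Phi}_n^{T}\boldsymbol{c}_k$, $b=\boldsymbol{\Phi}_n^{T}\boldsymbol{c}_{k+1}$ together with the scalar inequality $\sum_{j=0}^{2m-2}a^{2m-2-j}b^{j}\ge\tfrac12(a^{2m-2}+b^{2m-2})$. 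The right-hand side depends only on the \emph{endpoints} $\boldsymbol{c}_k,\boldsymbol{c}_{k+1}$, never on the segment between them; combined with the strict positive definiteness of $\mathcal{A}_K^{2m}\boldsymbol{c}^{2m-2}$ for $\boldsymbol{c}\neq\boldsymbol{0}$ and the fact (already established in the preceding subsection) that $(\boldsymbol{c}_k,\boldsymbol{c}_{k+1})$ eventually stays out of a neighbourhood of $(\boldsymbol{0},\boldsymbol{0})$, this gives a uniform $\nu>0$ with the right-hand side $\ge\nu\|\boldsymbol{c}_k-\boldsymbol{c}_{k+1}\|^{2}$. Cauchy--Schwarz then yields $\|\mathcal{A}_K^{2m}\boldsymbol{c}_k^{2m-1}-\mathcal{A}_K^{2m}\boldsymbol{c}_{k+1}^{2m-1}\|\ge\nu\|\boldsymbol{c}_k-\boldsymbol{c}_{k+1}\|$, and the rest proceeds as you describe. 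The zero-crossing difficulty simply never arises.

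One further issue: you invoke boundedness of $\{\boldsymbol{c}_k\}$ in your compactness step, but in the paper that is Corollary~\ref{Corollary:4.4}, proved \emph{from} Lemma~\ref{Lemma:4.3}, so your argument as written is circular. The paper's endpoint-only route avoids this: the smallest eigenvalue of $\mathcal{A}_K^{2m}\boldsymbol{c}^{2m-2}$ is homogeneous of degree $2m-2$ in $\boldsymbol{c}$, so once $\max(\|\boldsymbol{c}_k\|,\|\boldsymbol{c}_{k+1}\|)$ is bounded below the sum $\mathcal{A}_K^{2m}\boldsymbol{c}_k^{2m-2}+\mathcal{A}_K^{2m}\boldsymbol{c}_{k+1}^{2m-2}$ has a uniform spectral gap with no upper bound on the iterates required.
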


    \begin{proof}
       From (\ref{S-1}), we know that $\boldsymbol{\alpha}_{k+1}$ is the minimizer of $\mathcal{L}_{\beta}(\boldsymbol{\alpha},\boldsymbol{c}_{k},\boldsymbol{\gamma}_{k})$, therefore
     \begin{equation}\label{4.1}
        0\leq \mathcal{L}_{\beta}(\boldsymbol{\alpha}_{k},\boldsymbol{c}_{k},\boldsymbol{\gamma}_{k})-\mathcal{L}_{\beta}(\boldsymbol{\alpha}_{k+1},\boldsymbol{c}_{k},\boldsymbol{\gamma}_{k}).
     \end{equation}

    Similarily from the definition of $\mathcal{L}_{\beta}$, (\ref{S-3}) and rearranging terms, we see that
     \begin{align}\label{4.2}
       &\mathcal{L}_{\beta}(\boldsymbol{\alpha}_{k+1},\boldsymbol{c}_{k},\boldsymbol{\gamma}_{k})-\mathcal{L}_{\beta}(\boldsymbol{\alpha}_{k+1},\boldsymbol{c}_{k+1},\boldsymbol{\gamma}_{k}) \notag \\ 
      =&G(\boldsymbol{c}_{k})-G(\boldsymbol{c}_{k+1})-(\boldsymbol{\gamma}_{k})^{T}(\mathcal{A}_{K}^{2m}(\boldsymbol{c}_{k})^{2m-1}-\mathcal{A}_{K}^{2m}(\boldsymbol{c}_{k+1})^{2m-1}) \notag \\
        &+\frac{\beta}{2}\|\boldsymbol{\alpha}_{k+1}-\mathcal{A}_{K}^{2m}(\boldsymbol{c}_{k})^{2m-1}\|^{2}-\frac{\beta}{2}\|\boldsymbol{\alpha}_{k+1}-\mathcal{A}_{K}^{2m}(\boldsymbol{c}_{k+1})^{2m-1}\|^{2}. \\
      =&G(\boldsymbol{c}_{k})-G(\boldsymbol{c}_{k+1})-(\boldsymbol{\gamma}_{k+1})^{T}(\mathcal{A}_{K}^{2m}(\boldsymbol{c}_{k})^{2m-1}-\mathcal{A}_{K}^{2m}(\boldsymbol{c}_{k+1})^{2m-1}) \notag \\
 &+\frac{\beta}{2}\|\mathcal{A}_{K}^{2m}(\boldsymbol{c}_{k})^{2m-1}-\mathcal{A}_{K}^{2m}(\boldsymbol{c}_{k+1})^{2m-1}\|^{2}. \notag
    \end{align}
 it follows that for any $\boldsymbol{c}\in \mathbb{R}^{N}$, 
    \begin{align*}
     &G(\boldsymbol{c})-G(\boldsymbol{c}_{k+1})-(\boldsymbol{\gamma}_{k+1})^{T}(\mathcal{A}_{K}^{2m}\boldsymbol{c}^{2m-1}-\mathcal{A}_{K}^{2m}(\boldsymbol{c}_{k+1})^{2m-1}) \\
   =&\lambda \mathcal{A}_{K}^{2m}(\boldsymbol{c}_{k})^{2m}-\lambda\mathcal{A}_{K}^{2m}(\boldsymbol{c}_{k+1})^{2m}-\tfrac{2m\lambda}{2m-1} (\mathcal{A}_{K}^{2m}(\boldsymbol{c}_{k})^{2m-1})^{T}\boldsymbol{c}_{k+1}+\tfrac{2m\lambda}{2m-1}\mathcal{A}_{K}^{2m}(\boldsymbol{c}_{k+1})^{2m} \\
   =&\lambda \mathcal{A}_{K}^{2m}(\boldsymbol{c}_{k})^{2m}+\tfrac{\lambda}{2m-1}\mathcal{A}_{K}^{2m}(\boldsymbol{c}_{k+1})^{2m}-\tfrac{2m\lambda}{2m-1} (\mathcal{A}_{K}^{2m}(\boldsymbol{c}_{k})^{2m-1})^{T}\boldsymbol{c}_{k+1}.
    \end{align*}
Since $\mathcal{A}_{K}^{2m}\boldsymbol{c}^{2m-2}$ is positive definite, \cite[Proposition 1.1.7 (a) and 1.1.10 (i)]{Bertsekas2009} and (\ref{2.4}) assure that $\boldsymbol{c}\mapsto \frac{\lambda}{2m-1}\mathcal{A}_{K}^{2m}\boldsymbol{c}^{2m}$ is convex, and 
     $$
    \frac{\lambda}{2m-1}\mathcal{A}_{K}^{2m}(\boldsymbol{c}_{k+1})^{2m}\geq \frac{\lambda}{2m-1}\mathcal{A}_{K}^{2m}(\boldsymbol{c}_{k})^{2m}+(\frac{2m\lambda}{2m-1} \mathcal{A}_{K}^{2m}(\boldsymbol{c}_{k})^{2m-1})^{T}(\boldsymbol{c}_{k+1}-\boldsymbol{c}_{k}).
     $$
Combining (\ref{2.3}) with two relations above, it follows that
    \begin{equation}\label{4.3}
       G(\boldsymbol{c}_{k})-G(\boldsymbol{c}_{k+1})-(\boldsymbol{\gamma}_{k+1})^{T}(\mathcal{A}_{K}^{2m}(\boldsymbol{c}_{k})^{2m-1}-\mathcal{A}_{K}^{2m}(\boldsymbol{c}_{k+1})^{2m-1})\geq 0.
    \end{equation}
From Assumption \ref{Assumption:4.1} (ii), (iii) and (iv), there exists $\nu>0$ and $k_{0}\in \mathbb{N}$ such that for any $k>k_{0}$, $\mathcal{A}_{K}^{2m}(\boldsymbol{c}_{k+1})^{2m-2}+\mathcal{A}_{K}^{2m}(\boldsymbol{c}_{k})^{2m-2}-2\nu I$ is strictly positive definite. Thus, 
      \begin{align*}
        &(\boldsymbol{c}_{k}-\boldsymbol{c}_{k+1})^{T}(\mathcal{A}_{K}^{2m}(\boldsymbol{c}_{k})^{2m-1}-\mathcal{A}_{K}^{2m}(\boldsymbol{c}_{k+1})^{2m-1})  \\
       =\ &(\boldsymbol{c}_{k}-\boldsymbol{c}_{k+1})^{T}\sum_{n\in \mathbb{N}} \left(\left(\boldsymbol{\Phi}_{n}^{T}\boldsymbol{c}_{k}\right)^{2m-1}-\left(\boldsymbol{\Phi}_{n}^{T}\boldsymbol{c}_{k+1}\right)^{2m-1}\right)\boldsymbol{\Phi}_{n} \notag \\
       =\ &(\boldsymbol{c}_{k}-\boldsymbol{c}_{k+1})^{T}\left(\sum_{n\in \mathbb{N}} \left(\sum_{j=0}^{2m-2} \left(\boldsymbol{\Phi}_{n}^{T}\boldsymbol{c}_{k}\right)^{2m-2-j}\left(\boldsymbol{\Phi}_{n}^{T}\boldsymbol{c}_{k+1}\right)^{j}\right)\boldsymbol{\Phi}_{n}\boldsymbol{\Phi}_{n}^{T}\right)(\boldsymbol{c}_{k}-\boldsymbol{c}_{k+1}) \notag \\
       \geq\ &\frac{1}{2}(\boldsymbol{c}_{k}-\boldsymbol{c}_{k+1})^{T}\left(\sum_{n\in \mathbb{N}} \left(\left(\boldsymbol{\Phi}_{n}^{T}\boldsymbol{c}_{k}\right)^{2m-2}+\left(\boldsymbol{\Phi}_{n}^{T}\boldsymbol{c}_{k+1}\right)^{2m-2}\right)\boldsymbol{\Phi}_{n}\boldsymbol{\Phi}_{n}^{T}\right)(\boldsymbol{c}_{k}-\boldsymbol{c}_{k+1}) \notag \\
       =\ & \frac{1}{2}(\boldsymbol{c}_{k}-\boldsymbol{c}_{k+1})^{T}\left(\mathcal{A}_{K}^{2m}(\boldsymbol{c}_{k})^{2m-2}+\mathcal{A}_{K}^{2m}(\boldsymbol{c}_{k+1})^{2m-2}\right)(\boldsymbol{c}_{k}-\boldsymbol{c}_{k+1}). \notag \\
       \geq\ &\nu \|\boldsymbol{c}_{k}-\boldsymbol{c}_{k+1}\|^{2}.
     \end{align*}
From Cauchy-Schwartz inequality, it follows that 
    $$
      \|\mathcal{A}_{K}^{2m}(\boldsymbol{c}_{k})^{2m-1}-\mathcal{A}_{K}^{2m}(\boldsymbol{c}_{k+1})^{2m-1}\|\geq \nu \|\boldsymbol{c}_{k}-\boldsymbol{c}_{k+1}\|.
    $$
Since $s_{k},s_{k+1}\in \Delta_{K}^{\frac{2m}{2m-1}}(X)$, then
    $$
       \Psi(s_{k})-\Psi(s_{k+1})=\sum_{n\in \mathbb{N}}\left(\boldsymbol{\Phi}_{n}^{T}(\boldsymbol{c}_{k}-\boldsymbol{c}_{k+1})\right)\phi_{n}\in \mathrm{span}\{K(\boldsymbol{x}_{1},\cdot),...,K(\boldsymbol{x}_{N},\cdot)\}
    $$
By Assumption \ref{Assumption:4.1} (iv), we see that $\Psi(f)\mapsto \boldsymbol{c}$ is an isomorphism and \cite[1.4.14 Proposition (a)]{Megginson1998} assures that there are $0<w_{1}\leq w_{2}$ such that
    \begin{equation}\label{4.4}
       w_{1}\|\Psi(s_{k})-\Psi(s_{k+1})\|_{\mathcal{B}_{K}^{2m}(X)} \leq \|\boldsymbol{c}_{k}-\boldsymbol{c}_{k+1}\|\leq w_{2}\|\Psi(s_{k})-\Psi(s_{k+1})\|_{\mathcal{B}_{K}^{2m}(X)}.
    \end{equation}
Hence, 
    \begin{equation}\label{4.5}
       \frac{\beta(\nu w_{1})^{2}}{2}\|\Psi(s_{k})-\Psi(s_{k+1})\|_{\mathcal{B}_{K}^{2m}(X)}^{2}\leq \frac{\beta}{2}\|\mathcal{A}_{K}^{2m}(\boldsymbol{c}_{k})^{2m-1}-\mathcal{A}_{K}^{2m}(\boldsymbol{c}_{k+1})^{2m-1}\|^{2}.
    \end{equation}
From (\ref{4.2}), (\ref{4.3}) and (\ref{4.5}), we have that
    \begin{equation}\label{4.6}
       \frac{\beta(\nu w_{1})^{2}}{2}\|\Psi(s_{k})-\Psi(s_{k+1})\|_{\mathcal{B}_{K}^{2m}(X)}^{2}\leq \mathcal{L}_{\beta}(\boldsymbol{\alpha}_{k+1},\boldsymbol{c}_{k},\boldsymbol{\gamma}_{k})-\mathcal{L}_{\beta}(\boldsymbol{\alpha}_{k+1},\boldsymbol{c}_{k+1},\boldsymbol{\gamma}_{k}). 
    \end{equation}

    Furthermore, from the definition of $\mathcal{L}_{\beta}$ and (\ref{S-3}), it follows that
    \begin{equation}\label{4.7}
        -\frac{1}{\beta}\|\boldsymbol{\gamma}_{k}-\boldsymbol{\gamma}_{k+1}\|^{2}=\mathcal{L}_{\beta}(\boldsymbol{\alpha}_{k+1},\boldsymbol{c}_{k+1},\boldsymbol{\gamma}_{k})-\mathcal{L}_{\beta}(\boldsymbol{\alpha}_{k+1},\boldsymbol{c}_{k+1},\boldsymbol{\gamma}_{k+1}).
    \end{equation}
We denote 
    $$
       \zeta_{1}=\frac{4m^{2}\lambda^{2}(w_{2})^{2}}{(2m-1)^{2}\beta},\ \ \beta_{min}:=\frac{4m\lambda w_{2}}{(2m-1)\nu w_{1}}.
    $$
Then $\zeta_{1}>0$ and $\beta_{min}>0$. Inserting (\ref{S-3'}) and (\ref{4.4}) into (\ref{4.7}), we have that
    \begin{equation}\label{4.8}
        -\zeta_{1}\|\Psi(s_{k})-\Psi(s_{k+1})\|_{\mathcal{B}_{K}^{2m}(X)}^{2}\leq \mathcal{L}_{\beta}(\boldsymbol{\alpha}_{k+1},\boldsymbol{c}_{k+1},\boldsymbol{\gamma}_{k})-\mathcal{L}_{\beta}(\boldsymbol{\alpha}_{k+1},\boldsymbol{c}_{k+1},\boldsymbol{\gamma}_{k+1}).
    \end{equation}
If $\beta>\beta_{min}$, then
    $$
       \frac{\beta \nu^{2}(w_{1})^{2}}{2}-\zeta_{1}=\frac{\beta^{2}\nu^{2}(w_{1})^{2}-\frac{8m^{2}\lambda^{2}(w_{2})^{2}}{(2m-1)^{2}}}{2\beta}>\frac{4m^{2}\lambda^{2}(w_{2})^{2}}{(2m-1)^{2}\beta}=\zeta_{1}>0.
    $$
From (\ref{4.1}), (\ref{4.6}) and (\ref{4.8}), if $k>k_{0}$, then the following descent inequality holds
    $$
    \zeta_{1}\|\Psi(s_{k})-\Psi(s_{k+1})\|_{\mathcal{B}_{K}^{2m}(X)}^{2}\leq \mathcal{L}_{\beta}(\boldsymbol{\alpha}_{k},\boldsymbol{c}_{k},\boldsymbol{\gamma}_{k})-\mathcal{L}_{\beta}(\boldsymbol{\alpha}_{k+1},\boldsymbol{c}_{k+1},\boldsymbol{\gamma}_{k+1}).
    $$
This proof is completed.
    \end{proof}

    \begin{corollary}\label{Corollary:4.4}
    Suppose that Assumption \ref{Assumption:4.1} holds. If $\beta>\beta_{min}$, then the sequence $\{\mathcal{L}_{\beta}(\boldsymbol{\alpha}_{k},\boldsymbol{c}_{k},\boldsymbol{\gamma}_{k})\}$ is convergent.
    \end{corollary}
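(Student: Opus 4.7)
The plan is to combine Lemma \ref{Lemma:4.3} with a uniform lower bound on the sequence $\{\mathcal{L}_{\beta}(\boldsymbol{\alpha}_k,\boldsymbol{c}_k,\boldsymbol{\gamma}_k)\}$ and then invoke the monotone convergence theorem. Since Lemma \ref{Lemma:4.3} already tells us that for $\beta > \beta_{min}$ and $k > k_0$,
\[
\mathcal{L}_\beta(\boldsymbol{\alpha}_{k+1},\boldsymbol{c}_{k+1},\boldsymbol{\gamma}_{k+1}) \le \mathcal{L}_\beta(\boldsymbol{\alpha}_k,\boldsymbol{c}_k,\boldsymbol{\gamma}_k) - \zeta_1\|\Psi(s_k)-\Psi(s_{k+1})\|_{\mathcal{B}_K^{2m}(X)}^2 \le \mathcal{L}_\beta(\boldsymbol{\alpha}_k,\boldsymbol{c}_k,\boldsymbol{\gamma}_k),
\]
the sequence is eventually non-increasing, and convergence follows as soon as one exhibits a uniform lower bound.

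The bulk of the work would therefore go into establishing this lower bound by exploiting the closed-form dual update (\ref{S-3'}). For $k \ge 1$, (\ref{S-3'}) gives $\boldsymbol{\gamma}_k = \tfrac{2m\lambda}{2m-1}\boldsymbol{c}_k$, and combining (\ref{S-3}) with (\ref{S-3'}) yields, for $k \ge 2$,
\[
\boldsymbol{\alpha}_k - \mathcal{A}_K^{2m}\boldsymbol{c}_k^{2m-1} = \tfrac{2m\lambda}{(2m-1)\beta}(\boldsymbol{c}_k-\boldsymbol{c}_{k-1}).
\]
Substituting these two identities into the definition of $\mathcal{L}_\beta$, using $F\ge 0$ and the elementary inequality $\boldsymbol{c}_k^T(\boldsymbol{c}_k-\boldsymbol{c}_{k-1}) \ge -\tfrac{1}{2}\|\boldsymbol{c}_k\|^2 - \tfrac{1}{2}\|\boldsymbol{c}_k-\boldsymbol{c}_{k-1}\|^2$, the $\|\boldsymbol{c}_k-\boldsymbol{c}_{k-1}\|^2$ terms cancel and one is left with the pointwise estimate
\[
\mathcal{L}_\beta(\boldsymbol{\alpha}_k,\boldsymbol{c}_k,\boldsymbol{\gamma}_k) \ge G(\boldsymbol{c}_k) - \tfrac{2m^2\lambda^2}{(2m-1)^2\beta}\|\boldsymbol{c}_k\|^2.
\]

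To turn this pointwise estimate into a genuine bound, I would then invoke the coercivity of $G$ coming from Assumption~\ref{Assumption:4.1}(iv). Indeed, the rank argument at the end of Subsection~4.1 shows that $\mathcal{A}_K^{2m}\boldsymbol{c}^{2m} = \sum_{n\in\mathbb{N}}(\boldsymbol{\Phi}_n^T\boldsymbol{c})^{2m}$ is strictly positive for every $\boldsymbol{c} \neq \boldsymbol{0}$; continuity of this polynomial together with compactness of the unit sphere in $\mathbb{R}^N$ then produces a constant $\eta > 0$ with $G(\boldsymbol{c}) \ge \lambda\eta\|\boldsymbol{c}\|^{2m}$. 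The real-variable function $t \mapsto \lambda\eta t^{2m} - \tfrac{2m^2\lambda^2}{(2m-1)^2\beta}t^2$ is bounded below on $[0,\infty)$ for every $m\in\mathbb{N}$ (automatically when $m\ge 2$, and when $m=1$ after enlarging $\beta_{min}$, if necessary, to guarantee $\beta \ge 2\lambda/\eta$), which gives the uniform lower bound and completes the argument.

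The main obstacle is this second stage: the augmented Lagrangian contains the sign-indefinite cross term $\boldsymbol{\gamma}^T(\boldsymbol{\alpha}-\mathcal{A}_K^{2m}\boldsymbol{c}^{2m-1})$, and because the update (\ref{S-3'}) ties $\boldsymbol{\gamma}_k$ to the a priori unbounded iterate $\boldsymbol{c}_k$, the lower bound cannot be obtained by the usual completing-the-square trick alone and fundamentally relies on the coercive $\|\boldsymbol{c}\|^{2m}$ growth of $G$ supplied by Assumption~\ref{Assumption:4.1}(iv). Once that is secured, the corollary follows by monotone convergence.
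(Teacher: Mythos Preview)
Your proposal is correct and follows the same strategy as the paper: eventual monotonicity from Lemma~\ref{Lemma:4.3}, the pointwise estimate $\mathcal{L}_\beta(\boldsymbol{\alpha}_k,\boldsymbol{c}_k,\boldsymbol{\gamma}_k) \ge G(\boldsymbol{c}_k) - \tfrac{2m^2\lambda^2}{(2m-1)^2\beta}\|\boldsymbol{c}_k\|^2$, and then coercivity of the right-hand side under Assumption~\ref{Assumption:4.1}(iv). Two minor points where the paper is slightly cleaner: (i)~the pointwise estimate falls out immediately from completing the square, $\boldsymbol{\gamma}^T r + \tfrac{\beta}{2}\|r\|^2 \ge -\tfrac{1}{2\beta}\|\boldsymbol{\gamma}\|^2$, together with (\ref{S-3'}) alone, so your detour through (\ref{S-3}) and the cancellation of the $\|\boldsymbol{c}_k-\boldsymbol{c}_{k-1}\|^2$ terms is unnecessary; (ii)~for $m=1$ the paper verifies, using the explicit form $\beta_{min} = \tfrac{4\lambda w_2}{\nu w_1}$ from the proof of Lemma~\ref{Lemma:4.3} and $w_2 \ge w_1$, that this $\beta_{min}$ already forces the quadratic $\lambda\nu\|\boldsymbol{c}\|^2 - \tfrac{2\lambda^2}{\beta}\|\boldsymbol{c}\|^2$ to be nonnegative, so no enlargement of $\beta_{min}$ is needed and the corollary holds with the stated constant.
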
    

    \begin{proof}
    By Lemma \ref{Lemma:4.3}, it follows that $\{\mathcal{L}_{\beta}(\boldsymbol{\alpha}_{k},\boldsymbol{c}_{k},\boldsymbol{\gamma}_{k})\}$ is monotonically decreasing when $k>k_{0}$. Since $F(\boldsymbol{\alpha}_{k})\geq 0$, we have that
    \begin{equation}\label{4.9}
       \mathcal{L}_{\beta}(\boldsymbol{\alpha}^{k_{0}+1},\boldsymbol{c}^{k_{0}+1},\boldsymbol{\gamma}^{k_{0}+1})\geq \mathcal{L}_{\beta}(\boldsymbol{\alpha}_{k},\boldsymbol{c}_{k},\boldsymbol{\gamma}_{k}) \geq G(\boldsymbol{c}_{k})-\frac{2m^{2}\lambda^{2}}{(2m-1)^{2}\beta}\|\boldsymbol{c}_{k}\|^{2}. 
    \end{equation}
If $m=1$, then $\nu>0$ is its strong convexity parameter because of the symmetry and strictly positive definiteness of $\mathcal{A}_{K}^{2}$. Hence, \cite[Example 5.19 and Theorem 5.24 (iii)]{Beck2017} show that
    $$
       G(\boldsymbol{c})=\lambda (\boldsymbol{c}-\boldsymbol{0})^{T}(\mathcal{A}_{K}^{2}\boldsymbol{c}-\boldsymbol{0})\geq \lambda\nu \|\boldsymbol{c}\|^{2},
    $$
Since $\beta>\beta_{min}=\frac{4\lambda w_{2}}{\nu w_{1}}$ and $w_{2}\geq w_{1}$, it follows that for any $\boldsymbol{c}\in \mathbb{R}^{N}$, 
    $$
       G(\boldsymbol{c})-\frac{2\lambda^{2}}{\beta}\|\boldsymbol{c}\|^{2} \geq \lambda\nu \|\boldsymbol{c}\|^{2}-\frac{\lambda\nu w_{1}}{2w_{2}} \|\boldsymbol{c}\|^{2}\geq \frac{\lambda\nu}{2} \|\boldsymbol{c}\|^{2}.
    $$
Hence, the function $G(\boldsymbol{c})-\frac{2\lambda^{2}}{\beta}\|\boldsymbol{c}\|^{2}$ is continuous and coercive. Moreover, if $m>1$, then by definition, $G(\boldsymbol{c})-\frac{2m^{2}\lambda^{2}}{(2m-1)^{2}\beta}\|\boldsymbol{c}\|^{2}$ is also continuous and coercive. Thus, according to Weierstrass Theorem, the sequence $\{G(\boldsymbol{c}_{k})-\frac{2m^{2}\lambda^{2}}{(2m-1)^{2}\beta}\|\boldsymbol{c}_{k}\|^{2}\}$ is lower bounded, which means that $\{\mathcal{L}_{\beta}(\boldsymbol{\alpha}_{k},\boldsymbol{c}_{k},\boldsymbol{\gamma}_{k})\}$ is also lower bounded when $k>k_{0}$. Hence, \cite[Theorem 3.24]{Rudin1976Principles} shows the convergence of $\{\mathcal{L}_{\beta}(\boldsymbol{\alpha}_{k},\boldsymbol{c}_{k},\boldsymbol{\gamma}_{k})\}$. This proof is completed.
    \end{proof}

    For notational convenience, since $\{\mathcal{L}_{\beta}(\boldsymbol{\alpha}_{k},\boldsymbol{c}_{k},\boldsymbol{\gamma}_{k})\}$ is convergent, we denote the residual 
    $$
      r_{k}:=\mathcal{L}_{\beta}(\boldsymbol{\alpha}_{k},\boldsymbol{c}_{k},\boldsymbol{\gamma}_{k})-\lim_{k\rightarrow \infty}\mathcal{L}_{\beta}(\boldsymbol{\alpha}_{k},\boldsymbol{c}_{k},\boldsymbol{\gamma}_{k}).
    $$
When $k>k_{0}$, Lemma \ref{Lemma:4.3} can be rewritten as , 
    $$
     \zeta_{1}\|\Psi(s_{k})-\Psi(s_{k+1})\|_{\mathcal{B}_{K}^{2m}(X)}^{2}\leq r_{k}-r_{k+1}.
    $$
Moreover, we conclude that $\{r_{k}\}$ is monotonically decreasing when $k>k_{0}$ and converges to $0$.

    Now we prove Theorem \ref{Theorem:4.2} by Lemma \ref{Lemma:4.3} and Corollary \ref{Corollary:4.4}. 

     \begin{proof}[\bf{Proof of \ref{Theorem:4.2}}]
    We consider the following two cases of $\{r_{k}\}$:

    (I) If for $k$ sufficiently large, $r_{k}=r_{k+1}=0$, then Lemma \ref{Lemma:4.3} shows that $\Psi(s_{k})=\Psi(s_{k+1})$. Since $\Psi$ is a homeomorphism, it follows that $s_{k}=s_{k+1}$. Hence, $\{s_{k}\}$ is convergent. \\

    (II) If $r_{k}>0$ for any $k>k_{0}$, then for any $m\in \mathbb{N}$, (\ref{4.9}) assures that $\{\boldsymbol{c}_{k}\}$ is bounded. Moreover, (\ref{S-3'}) shows that $\{\boldsymbol{\gamma}_{k}\}$ is also bounded. Furthermore, by \cite[Lemma 2.2]{Qi2018} and (\ref{S-3}), we have that 
    $$
         \| \boldsymbol{\alpha}_{k}\|\leq \|\mathcal{A}_{K}^{2m}(\boldsymbol{c}_{k})^{2m-1}\|+\frac{1}{\beta}\|\boldsymbol{\gamma}_{k}-\boldsymbol{\gamma}_{k-1}\|\leq \|\mathcal{A}_{K}^{2m}\|_{F}\|\boldsymbol{c}_{k}\|^{2m-1}+\frac{1}{\beta}\left(\|\boldsymbol{\gamma}_{k}\|+\|\boldsymbol{\gamma}_{k-1}\|\right),
    $$
where $\|\cdot\|_{F}$ denotes the Frobenius norm of tensor (see \cite[Section 1.1]{Qi2018}). Thus, $\{(\boldsymbol{\alpha}_{k},\boldsymbol{c}_{k},\boldsymbol{\gamma}_{k})\}$ is bounded. Let $S$ be the set of subsequential limits of $\{(\boldsymbol{\alpha}_{k},\boldsymbol{c}_{k},\boldsymbol{\gamma}_{k})\}$. Then \cite[Theorem 3.6 and 3.7]{Rudin1976Principles} show that $S$ is nonempty compact, and 
    \begin{equation}\label{4.10}
       \lim_{k\rightarrow \infty} \text{\normalfont dist}((\boldsymbol{\alpha}_{k},\boldsymbol{c}_{k},\boldsymbol{\gamma}_{k}),S)=0.
    \end{equation}
where dist$(\cdot,\cdot)$ denotes Euclidean distance. By Assumption \ref{Assumption:4.1} (i), it follows that $\mathcal{L}_{\beta}$ is a KL function on $\mathbb{R}^{3N}$ and thus a KL function on $S$. Moreover, we verify that $\mathcal{L}_{\beta}$ is constant on $S$. 

    For any $(\boldsymbol{\alpha}_{*},\boldsymbol{c}_{*},\boldsymbol{\gamma}_{*})\in S$, there exists a subsequence $\{(\boldsymbol{\alpha}_{k_{j}},\boldsymbol{c}_{k_{j}},\boldsymbol{\gamma}_{k_{j}})\}$ that converges to $(\boldsymbol{\alpha}_{*},\boldsymbol{c}_{*},\boldsymbol{\gamma}_{*})$. Hence, the lower semi-continuity of $\mathcal{L}_{\beta}$ at $(\boldsymbol{\alpha}_{*},\boldsymbol{c}_{*},\boldsymbol{\gamma}_{*})$ and Corollary \ref{Corollary:4.4} show that
    \begin{equation}\label{4.11}
      \mathcal{L}_{\beta}(\boldsymbol{\alpha}_{*},\boldsymbol{c}_{*},\boldsymbol{\gamma}_{*})\leq \lim_{k\rightarrow \infty}\mathcal{L}_{\beta}(\boldsymbol{\alpha}_{k},\boldsymbol{c}_{k},\boldsymbol{\gamma}_{k})=\liminf_{j\rightarrow \infty} \mathcal{L}_{\beta}(\boldsymbol{\alpha}_{k_{j}},\boldsymbol{c}_{k_{j}},\boldsymbol{\gamma}_{k_{j}}).
    \end{equation}

    Conversely, since $\boldsymbol{\alpha}_{k_{j}+1}$ is the minimizer of $\mathcal{L}_{\beta}(\boldsymbol{\alpha},\boldsymbol{c}_{k_{j}} ,\boldsymbol{\gamma}_{k_{j}})$ and $\delta>0$, it shows that when $k_{j}>k_{0}$, 
     $$
        \mathcal{L}_{\beta}(\boldsymbol{\alpha}_{*},\boldsymbol{c}_{k_{j}},\boldsymbol{\gamma}_{k_{j}})\geq \mathcal{L}_{\beta}(\boldsymbol{\alpha}_{k_{j}+1},\boldsymbol{c}_{k_{j}},\boldsymbol{\gamma}_{k_{j}})\geq \mathcal{L}_{\beta}(\boldsymbol{\alpha}_{k_{j}+1},\boldsymbol{c}_{k_{j}+1},\boldsymbol{\gamma}_{k_{j}+1}).
     $$
From the continuity of $\mathcal{L}_{\beta}$ with respect to $\boldsymbol{c}$ and $\boldsymbol{\gamma}$, it holds
     $$
        \lim_{j\rightarrow \infty} \mathcal{L}_{\beta}(\boldsymbol{\alpha}_{*},\boldsymbol{c}_{k_{j}},\boldsymbol{\gamma}_{k_{j}})=\mathcal{L}_{\beta}(\boldsymbol{\alpha}_{*},\boldsymbol{c}_{*},\boldsymbol{\gamma}_{*}).
     $$
Combining with two inequalities above, we verify that
     $$
    \lim\limits_{j\rightarrow \infty}\mathcal{L}_{\beta}(\boldsymbol{\alpha}_{k_{j}+1},\boldsymbol{c}_{k_{j}+1},\boldsymbol{\gamma}_{k_{j}+1})=\lim_{k\rightarrow \infty}\mathcal{L}_{\beta}(\boldsymbol{\alpha}_{k},\boldsymbol{c}_{k},\boldsymbol{\gamma}_{k}).
     $$
From \cite[Theorem 3.19]{Rudin1976Principles}, the three relations above show that
    \begin{equation}\label{4.12}
       \mathcal{L}_{\beta}(\boldsymbol{\alpha}_{*},\boldsymbol{c}_{*},\boldsymbol{\gamma}_{*})\geq \lim_{k\rightarrow \infty}\mathcal{L}_{\beta}(\boldsymbol{\alpha}_{k},\boldsymbol{c}_{k},\boldsymbol{\gamma}_{k}).
    \end{equation}
Finally, (\ref{4.11}) and (\ref{4.12}) assure that
    \begin{equation*}
       \mathcal{L}_{\beta}(\boldsymbol{\alpha}_{*},\boldsymbol{c}_{*},\boldsymbol{\gamma}_{*})=\lim_{k\rightarrow \infty}\mathcal{L}_{\beta}(\boldsymbol{\alpha}_{k},\boldsymbol{c}_{k},\boldsymbol{\gamma}_{k}).
    \end{equation*}
Therefore, $\mathcal{L}_{\beta}$ is constant on $S$, that is, the image $\mathcal{L}_{\beta}(S)$ is a singleton and
    \begin{equation}\label{4.13}
        \lim_{k\rightarrow \infty}\mathcal{L}_{\beta}(\boldsymbol{\alpha}_{k},\boldsymbol{c}_{k},\boldsymbol{\gamma}_{k})=\mathcal{L}_{\beta}(S).
    \end{equation}
Hence, \cite[Lemma 3.6]{Bolte2014Proximal} assure that there exist $\varepsilon>0$, $\eta>0$ and a continuous concave function $\varphi:[0,\eta)\rightarrow (0,\infty)$ related to KL property such that 

    (i) $\varphi(0)=0$ and $\varphi$ is continuously differentiable on $(0,\eta)$ with positive derivatives;

    (ii) if $\text{dist}((\boldsymbol{\alpha},\boldsymbol{c},\boldsymbol{\gamma}),S)<\varepsilon$ and $\mathcal{L}_{\beta}(S)<\mathcal{L}_{\beta}(\boldsymbol{\alpha},\boldsymbol{c},\boldsymbol{\gamma})<\mathcal{L}_{\beta}(S)+\eta$, then
    $$
       \varphi^{'}(\mathcal{L}_{\beta}(\boldsymbol{\alpha},\boldsymbol{c},\boldsymbol{\gamma})-\mathcal{L}_{\beta}(S))\ \text{\normalfont{dist}}(0,\partial \mathcal{L}_{\beta}(\boldsymbol{\alpha},\boldsymbol{c},\boldsymbol{\gamma})) \geq 1.
    $$
From Corollary \ref{Corollary:4.4}, (\ref{4.10}) and (\ref{4.13}), it suffices to show that for $\varepsilon>0$ and $\eta>0$ above, there exists an integer $k_{1}>k_{0}$ such that for any $k>k_{1}$, we have
    \begin{equation}\label{4.14}
      \varphi^{'}(r_{k})\ \text{\normalfont{dist}}(0,\partial \mathcal{L}_{\beta}(\boldsymbol{\alpha}_{k},\boldsymbol{c}_{k},\boldsymbol{\gamma}_{k}))\geq 1.
    \end{equation}
From the concavity of $\varphi$, we get that
    $$
     \varphi^{'}(r_{k})(r_{k}-r_{k+1})\leq \varphi(r_{k})-\varphi(r_{k+1}).
    $$
Multiplying $\text{\normalfont{dist}}(0,\partial \mathcal{L}_{\beta}(\boldsymbol{\alpha}_{k},\boldsymbol{c}_{k},\boldsymbol{\gamma}_{k}))$ on both side and using (\ref{4.14}), we obtain that
    \begin{equation}\label{4.15}
      r_{k}-r_{k+1} \leq \text{\normalfont{dist}}(0,\partial \mathcal{L}_{\beta}(\boldsymbol{\alpha}_{k},\boldsymbol{c}_{k},\boldsymbol{\gamma}_{k})) (\varphi(r_{k})-\varphi(r_{k+1})).
    \end{equation}
By \cite[8.8 Exercise (c) and 10.5 proposition]{Rockafellar1998} and (\ref{S-3'}), it follows that
    \begin{equation}\label{4.16}
       \partial \mathcal{L}_{\beta}(\boldsymbol{\alpha}_{k},\boldsymbol{c}_{k},\boldsymbol{\gamma}_{k})=\partial_{\boldsymbol{\alpha}} \mathcal{L}_{\beta}(\boldsymbol{\alpha}_{k},\boldsymbol{c}_{k},\boldsymbol{\gamma}_{k})\times \nabla_{\boldsymbol{c}} \mathcal{L}_{\beta}(\boldsymbol{\alpha}_{k},\boldsymbol{c}_{k},\boldsymbol{\gamma}_{k})\times \nabla_{\boldsymbol{\gamma}} \mathcal{L}_{\beta}(\boldsymbol{\alpha}_{k},\boldsymbol{c}_{k},\boldsymbol{\gamma}_{k}),
    \end{equation}
where
    \begin{align*}
         &\partial_{\boldsymbol{\alpha}} \mathcal{L}_{\beta}(\boldsymbol{\alpha}_{k},\boldsymbol{c}_{k},\boldsymbol{\gamma}_{k})=\partial F(\boldsymbol{\alpha}_{k})+\boldsymbol{\gamma}_{k}+\beta\left(\boldsymbol{\alpha}_{k}-\mathcal{A}_{K}^{2m}(\boldsymbol{c}_{k})^{2m-1}\right), \notag \\
         &\nabla_{\boldsymbol{c}} \mathcal{L}_{\beta}(\boldsymbol{\alpha}_{k},\boldsymbol{c}_{k},\boldsymbol{\gamma}_{k})=-(2m-1)\mathcal{A}_{K}^{2m}(\boldsymbol{c}_{k})^{2m-2}\cdot \beta\left(\boldsymbol{\alpha}_{k}-\mathcal{A}_{K}^{2m}(\boldsymbol{c}_{k})^{2m-1}\right), \\
         &\nabla_{\boldsymbol{\gamma}} \mathcal{L}_{\beta}(\boldsymbol{\alpha}_{k},\boldsymbol{c}_{k},\boldsymbol{\gamma}_{k})=\boldsymbol{\alpha}_{k}-\mathcal{A}_{K}^{2m}(\boldsymbol{c}_{k})^{2m-1} \notag.
    \end{align*}
Invoking the optimality condition for (\ref{S-1'}), we have that
    \begin{equation}\label{4.17}
       -\beta \left(\boldsymbol{\alpha}_{k}-\mathcal{A}_{K}^{2m}(\boldsymbol{c}_{k-1})^{2m-1}+\frac{1}{\beta}\boldsymbol{\gamma}_{k-1}\right)\in \partial F(\boldsymbol{\alpha}_{k}), 
    \end{equation}
From (\ref{S-3}), (\ref{4.16}) and (\ref{4.17}), we obtain further that
    \begin{align*}
         \boldsymbol{\alpha}^{\#}_{k}&:=\beta(\mathcal{A}_{K}^{2m}(\boldsymbol{c}_{k-1})^{2m-1}-\mathcal{A}_{K}^{2m}(\boldsymbol{c}_{k})^{2m-1})-(\boldsymbol{\gamma}_{k-1}-\boldsymbol{\gamma}_{k})\in \partial_{\boldsymbol{\alpha}} \mathcal{L}_{\beta}(\boldsymbol{\alpha}_{k},\boldsymbol{c}_{k},\boldsymbol{\gamma}_{k}), \notag \\
         \boldsymbol{c}^{\#}_{k}&:=(2m-1)\mathcal{A}_{K}^{2m}(\boldsymbol{c}_{k})^{2m-2} \cdot(\boldsymbol{\gamma}_{k-1}-\boldsymbol{\gamma}_{k})=\nabla_{\boldsymbol{c}} \mathcal{L}_{\beta}(\boldsymbol{\alpha}_{k},\boldsymbol{c}_{k},\boldsymbol{\gamma}_{k}),  \\
         \boldsymbol{\gamma}^{\#}_{k}&:=-\frac{1}{\beta}(\boldsymbol{\gamma}_{k-1}-\boldsymbol{\gamma}_{k})= \nabla_{\boldsymbol{\gamma}} \mathcal{L}_{\beta}(\boldsymbol{\alpha}_{k},\boldsymbol{c}_{k},\boldsymbol{\gamma}_{k}). \notag
    \end{align*}
Hence, we have that $(\boldsymbol{\alpha}^{\#}_{k},\boldsymbol{c}^{\#}_{k},\boldsymbol{\gamma}^{\#}_{k})\in \partial \mathcal{L}_{\beta}(\boldsymbol{\alpha}_{k},\boldsymbol{c}_{k},\boldsymbol{\gamma}_{k})$. It means that
    \begin{equation}\label{4.18}
       \text{\normalfont{dist}}(0,\partial \mathcal{L}_{\beta}(\boldsymbol{\alpha}_{k},\boldsymbol{c}_{k},\boldsymbol{\gamma}_{k})) \leq \|(\boldsymbol{\alpha}^{\#}_{k},\boldsymbol{c}^{\#}_{k},\boldsymbol{\gamma}^{\#}_{k})\|\leq \| \boldsymbol{\alpha}^{\#}_{k} \|+\|\boldsymbol{c}^{\#}_{k}\|+\|\boldsymbol{\gamma}^{\#}_{k}\|.
    \end{equation}
Since $\{\boldsymbol{c}_{k}\}$ is bounded, there exists $\upsilon>0$ such that $\|\boldsymbol{c}_{k}\|\leq\upsilon$.
Let 
    $$
       \iota:=(2m-1)\|\mathcal{A}_{K}^{2m}\|_{F}\upsilon^{2m-2}.
    $$
Then \cite[Lemma 2.2]{Qi2018} shows that for any $\boldsymbol{c}\in \mathbb{R}^{N}$ such that $\|\boldsymbol{c}\|\leq \upsilon$, we have
    $$
       \|(2m-1)\mathcal{A}_{K}^{2m}\boldsymbol{c}^{2m-2}\| \leq (2m-1)\|\mathcal{A}_{K}^{2m}\|_{F}\|\boldsymbol{c}\|^{2m-2}\leq (2m-1)\|\mathcal{A}_{K}^{2m}\|_{F}\upsilon^{2m-2}=\iota.
    $$
Using the convex subset $\{\boldsymbol{c}:\|\boldsymbol{c}\|\leq \upsilon\}$ instead of $\mathbb{R}^{N}$ to reproof \cite[Theorem 5.12]{Beck2017}, we see that $\boldsymbol{c}\mapsto \mathcal{A}_{K}^{2m}\boldsymbol{c}^{2m-1}$ is $\iota$-smooth on $\{\boldsymbol{c}:\|\boldsymbol{c}\|\leq \upsilon\}$. Since for any $k\in \mathbb{N}$, $\|\boldsymbol{c}_{k-1}\|\leq \upsilon$ and $\|\boldsymbol{c}_{k}\|\leq \upsilon$, then 
    \begin{equation}\label{4.19}
       \|\mathcal{A}_{K}^{2m}(\boldsymbol{c}_{k-1})^{2m-1}-\mathcal{A}_{K}^{2m}(\boldsymbol{c}_{k})^{2m-1}\|\leq \iota\|\boldsymbol{c}_{k-1}-\boldsymbol{c}_{k}\|.
    \end{equation}
From (\ref{S-3'}), (\ref{4.4}) and (\ref{4.19}), we have that
    \begin{align*}
       \|\mathcal{A}_{K}^{2m}(\boldsymbol{c}_{k-1})^{2m-1}-\mathcal{A}_{K}^{2m}(\boldsymbol{c}_{k})^{2m-1}\| &\leq \iota w_{2}\|\Psi(s_{k-1})-\Psi(s_{k})\|_{\mathcal{B}_{K}^{2m}(X)} \\
       \|\boldsymbol{\gamma}_{k-1}-\boldsymbol{\gamma}_{k}\|=\frac{2m\lambda}{2m-1}\|\boldsymbol{c}_{k-1}-\boldsymbol{c}_{k}\|&\leq \frac{2m\lambda w_{2}}{2m-1} \|\Psi(s_{k-1})-\Psi(s_{k})\|_{\mathcal{B}_{K}^{2m}(X)}.
    \end{align*}
Inserting three inequalites above into $\boldsymbol{\alpha}^{\#}_{k}$, $\boldsymbol{c}^{\#}_{k}$ and $\boldsymbol{\gamma}^{\#}_{k}$, we verify that
      \begin{align}\label{4.20}
         \| \boldsymbol{\alpha}^{\#}_{k} \| &\leq \left(\beta\iota w_{2}+\frac{2m\lambda w_{2}}{2m-1}\right) \|\Psi(s_{k-1})-\Psi(s_{k})\|_{\mathcal{B}_{K}^{2m}(X)},\notag \\
         \|\boldsymbol{c}^{\#}_{k}\| &\leq \frac{2m\lambda w_{2}\iota}{2m-1} \|\Psi(s_{k-1})-\Psi(s_{k})\|_{\mathcal{B}_{K}^{2m}(X)}, \\
         \|\boldsymbol{\gamma}^{\#}_{k}\| &\leq \frac{2m\lambda w_{2}}{(2m-1)\beta}  \|\Psi(s_{k-1})-\Psi(s_{k})\|_{\mathcal{B}_{K}^{2m}(X)} \notag.
      \end{align}
From (\ref{4.18}) and (\ref{4.20}), it follows that there exists $\zeta_{2}>0$ such that
      \begin{equation}\label{4.21}
         \text{\normalfont{dist}}(0,\partial \mathcal{L}_{\beta}(\boldsymbol{\alpha}_{k},\boldsymbol{c}_{k},\boldsymbol{\gamma}_{k})) \leq \zeta_{2} \|\Psi(s_{k-1})-\Psi(s_{k})\|_{\mathcal{B}_{K}^{2m}(X)}.
      \end{equation}    
Since $\zeta_{1}, \zeta_{2}>0$, whenever $k>k_{1}+1$, Lemma \ref{Lemma:4.3}, (\ref{4.15}) and (\ref{4.21}) assure that
    $$
     \zeta_{1}\|\Psi(s_{k})-\Psi(s_{k+1})\|_{\mathcal{B}_{K}^{2m}(X)}^{2}\leq \zeta_{2}\|\Psi(s_{k-1})-\Psi(s_{k})\|_{\mathcal{B}_{K}^{2m}(X)}\left(\varphi(r_{k})-\varphi(r_{k+1})\right).
    $$
By the mean inequality and rearranging term, we have that
    $$
     \|\Psi(s_{k})-\Psi(s_{k+1})\|_{\mathcal{B}_{K}^{2m}(X)} \leq \frac{\|\Psi(s_{k-1})-\Psi(s_{k})\|_{\mathcal{B}_{K}^{2m}(X)}+\dfrac{\zeta_{2}}{\zeta_{1}} (\varphi(r_{k})-\varphi(r_{k+1}))}{2}.
    $$
Summing up the above relation from $k=k_{1}+1$ to $\infty$ and rearranging terms, since $\varphi(r_{k_{1}+1})>0$, we see that
    $$
       \sum_{k=k_{1}+1}^{\infty} \|\Psi(s_{k})-\Psi(s_{k+1})\|_{\mathcal{B}_{K}^{2m}(X)} \leq \|\Psi(s_{k_{1}})-\Psi(s_{k_{1}+1})\|_{\mathcal{B}_{K}^{2m}(X)}+ \frac{\zeta_{2}}{\zeta_{1}}\varphi(r_{k_{1}+1}).
    $$
Thus, it shows that
    \begin{equation}\label{4.22}
       \sum\limits_{k\in \mathbb{N}} \|\Psi(s_{k})-\Psi(s_{k+1})\|_{\mathcal{B}_{K}^{2m}(X)}<\infty,
    \end{equation}
which means that $\{\Psi(s_{k})\}$ is a Cauchy sequence by triangle inequality. Since $\mathcal{B}_{K}^{2m}(X)$ is a Banach space which is a complete metric space, the convergence of $\{\Psi(s_{k})\}$ follows immediately from this. Since $\Psi$ is a homeomorphism, then the convergence of $\{\Psi(s_{k})\}$ implies the convergence of $\{s_{k}\}$. \\

    Combining (I) with (II), we conclude that $\{s_{k}\}$ is convergent. We denote
    $$
      s_{*}:=\lim_{k\to \infty} s_{k}.
    $$
Since $\{s_{k}\}\subseteq \Delta_{K}^{\frac{2m}{2m-1}}(X)$ and $\Delta_{K}^{\frac{2m}{2m-1}}(X)$ is closed, $s^{*} \in \Delta_{K}^{\frac{2m}{2m-1}}(X)$. Hence, there exists $\boldsymbol{c}_{*}\in \mathbb{R}^{N}$ such that $s^{*}$ has the representation
    $$
       s_{*}=\sum_{n\in \mathbb{N}}\left(\boldsymbol{\Phi}_{n}^{T}\boldsymbol{c}_{*}\right)^{2m-1}\phi_{n}.
    $$ 
Since $s_{k}\to s_{*}$ when $k\to \infty$, we observe that 
    $$
       \lim_{k\to \infty} \Psi(s_{k})=\Psi(s^{*})\ \  \text{and}\ \lim_{k\to \infty} \boldsymbol{c}_{k}=\boldsymbol{c}_{*}.
    $$
In particular, Assumption \ref{Assumption:4.1} (ii) and (iii) show that $\boldsymbol{c}_{*}\neq \boldsymbol{0}$ which means that $s_{*}\neq 0$. Next we show that $s_{*}$ is a stationary point of Optimization (\ref{2.1}). Let $\boldsymbol{\delta}:=(\delta_{\boldsymbol{x}_{1}},\delta_{\boldsymbol{x}_{2}},...,\delta_{\boldsymbol{x}_{N}})^{T}$, and $\delta_{\boldsymbol{x}_{i}}:\mathcal{B}_{K}^{\frac{2m}{2m-1}}(X)\to \mathbb{R}$ is the Dirac functional, that is, 
    $$
        \delta_{\boldsymbol{x}_{i}}(f):=f(\boldsymbol{x}_{i}),\ i=1,2,...,N.
    $$ 
Also, we denote the objective function of Optimization (\ref{2.1}) as $\mathcal{T}:\mathcal{B}_{K}^{\frac{2m}{2m-1}}(X)\to \mathbb{R}$. Thus, we see that
    $$
        \mathcal{T}(f)=\frac{1}{N} \sum_{i=1}^{N}L(\boldsymbol{x}_{i},y_{i},\delta_{\boldsymbol{x}_{i}}(f))+\lambda \|f\|_{\mathcal{B}_{K}^{\frac{2m}{2m-1}}(X)}^{\frac{2m}{2m-1}}.
    $$
Since $\delta_{\boldsymbol{x}_{i}}(f)=f(\boldsymbol{x}_{i})=\sum\limits_{n\in \mathbb{N}}a_{n}\phi_{n}(\boldsymbol{x}_{i}),\ i=1,2,...,N$, by derivative rule, we see that
    \begin{equation}\label{4.23}
       \nabla \boldsymbol{\delta}(s_{*})=\left(\sum_{n\in \mathbb{N}}\phi_{n}(\boldsymbol{x}_{1})\phi_{n},\sum_{n\in \mathbb{N}}\phi_{n}(\boldsymbol{x}_{2})\phi_{n},...,\sum_{n\in \mathbb{N}}\phi_{n}(\boldsymbol{x}_{N})\phi_{n}\right)^{T}.
    \end{equation}
Moreover, from (\ref{S-3}) and (\ref{S-3'}), it follows that the sequence $\{(\boldsymbol{\alpha}_{k},\boldsymbol{c}_{k},\boldsymbol{\gamma}_{k})\}$ also converges to $(\boldsymbol{\alpha}_{*},\boldsymbol{c}_{*},\boldsymbol{\gamma}_{*})\in S$, and
    $$
       \boldsymbol{\alpha}_{*}=\mathcal{A}_{K}^{2m}(\boldsymbol{c}_{*})^{2m-1},\ \boldsymbol{\gamma}_{*}=\frac{2m\lambda}{2m-1} \boldsymbol{c}_{*},
    $$
which ensures that
    \begin{equation}\label{4.24}
        \mathcal{L}_{\beta}(\boldsymbol{\alpha}_{*},\boldsymbol{c}_{*},\boldsymbol{\gamma}_{*})=F(\boldsymbol{\alpha}_{*})+G(\boldsymbol{c}_{*})=\mathcal{L}_{\beta}(S).
    \end{equation}
By (\ref{4.13}), (\ref{4.24}) and the continuity of $G$, we observe that
    $$
       \lim_{k\rightarrow \infty}F(\boldsymbol{\alpha}_{k})=F(\boldsymbol{\alpha}_{*})+G(\boldsymbol{c}_{*})-G(\boldsymbol{c}_{*})-0-0=F(\boldsymbol{\alpha}_{*}).
    $$
    In the view of (\ref{4.17}) and (\ref{S-3'}), by \cite[proposition 8.7]{Rockafellar1998} and passing to the limit along the sequence $\{(\boldsymbol{\alpha}_{k},\boldsymbol{c}_{k},\boldsymbol{\gamma}_{k})\}$, it follows that
    \begin{equation}\label{4.25}
       -\frac{2m\lambda}{2m-1} \boldsymbol{c}_{*}=-\boldsymbol{\gamma}_{*}\in \partial F(\boldsymbol{\alpha}_{*})=\partial F(\mathcal{A}_{K}^{2m}(\boldsymbol{c}_{*})^{2m-1}).
    \end{equation}
By (\ref{4.23}) and (\ref{4.25}), the chain rule shows that 
    $$
       (\nabla \boldsymbol{\delta}(s_{*}))^{T}(-\frac{2m\lambda}{2m-1} \boldsymbol{c}_{*})=-\frac{2m\lambda}{2m-1}\Psi(s_{*})\in \partial \left(\frac{1}{N} \sum_{i=1}^{N}L(\boldsymbol{x}_{i},y_{i},\delta_{\boldsymbol{x}_{i}}(\cdot))\right)(s_{*}).
    $$
On the other hand, since $s_{*}\neq 0$, (\ref{2.2}) assures that
    $$
       \nabla(\lambda \|\cdot\|_{\mathcal{B}_{K}^{\frac{2m}{2m-1}}(X)}^{\frac{2m}{2m-1}})(s_{*})=\frac{2m\lambda}{2m-1} \|s^{*}\|_{\mathcal{B}_{K}^{\frac{2m}{2m-1}}(X)}^{\frac{1}{2m-1}} \nabla(\|\cdot\|_{\mathcal{B}_{K}^{\frac{2m}{2m-1}}(X)}^{\frac{2m}{2m-1}})(s_{*})=\frac{2m\lambda}{2m-1}\Psi(s_{*}).
    $$
Combining with two relations above, \cite[Definition 1.77 and Proposition 1.107]{Mordukhovich2006} assure that
    $$
       0\in \partial \mathcal{T}(s_{*})=\partial \left(\frac{1}{N} \sum_{i=1}^{N}L(\boldsymbol{x}_{i},y_{i},\delta_{\boldsymbol{x}_{i}}(\cdot))\right)(s_{*})+\nabla(\lambda \|\cdot\|_{\mathcal{B}^{\frac{2m}{2m-1}}(X)}^{\frac{2m}{2m-1}})(s_{*}),
    $$
that is, $s_{*}$ is a stationary point of Optimization (\ref{2.1}). This proof is completed.
   \end{proof}

    \begin{remark}
      Similariy, we can deduce that $\{\boldsymbol{c}_{k}\}$ converges to a stationary point of Optimization (\ref{2.5}). We give an example of ADMM ((\ref{S-1}), (\ref{S-2}) and (\ref{S-3})) for the sum of a lower semi-continuous function and a convex function with nonlinear constraint.
   \end{remark}

    \section{Numerical Examples}
    \label{sec:5}
    In this section, we test Algorithm \ref{alg:ADMM} by the synthetic data and the real data for binary classification. We choose some training data and testing data, loss functions and kernels to test Algorithm \ref{alg:ADMM}. For simplicity, let $L_{1}$, $L_{2}$, $L_{3}$ and $L_{4}$ be four loss functions used in our experiments, that is,
    $$
      L_{1}(\boldsymbol{x},y,t)= \begin{cases} -yt+1, & yt-1<0 \\ 0, & yt-1\geq 0 \end{cases},\
       \ L_{2}(\boldsymbol{x},y,t)= \begin{cases} (-yt+1)^{2}, & yt-1<0  \\ 0, & yt-1 \geq 0 \end{cases},
    $$
and
    $$
      L_{3}(\boldsymbol{x},y,t)=\begin{cases} ln(2-yt), & yt-1<0 \\ 0, & yt-1 \geq 0  \end{cases},\ L_{4}(\boldsymbol{x},y,t)= \begin{cases} -yt+2, & yt-1<-1  \\ -2yt+2, & -1\leq yt-1<0  \\ 0, & yt-1 \geq 0 \end{cases}.
    $$
We see that $L_{1}$ is convex Hinge loss, $L_{2}$ is convex squared Hinge loss, $L_{3}$ is a nonconvex piecewise logarithmic loss function and $L_{4}$ is a nonconvex linear piecewise loss function. All of these loss functions satisfy Assumption \ref{Assumption:4.1} (i), (ii) and (iii). 
    \begin{figure}[h]
    \centering
    \includegraphics[width=3.6 in]{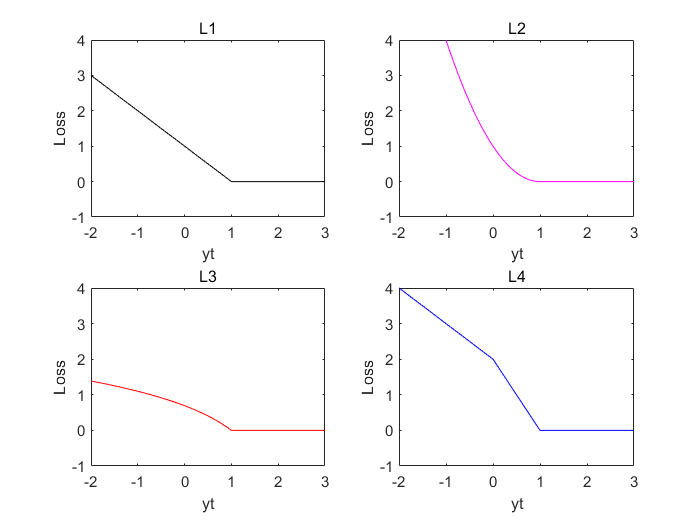}
    \caption{The support vector loss function $L_{1}$, $L_{2}$, $L_{3}$ and $L_{4}$. All are shown as a function of $yt$ rather than $t$, because of the symmetry between the $y=+1$ and $y=-1$ case.}
    \end{figure} 

    Let $K_{1}$ be the Gaussian kernel, that is, 
    $$
       K_{1}(\boldsymbol{x},\boldsymbol{x}')=\exp(-\sigma^{2} \| \boldsymbol{x}-\boldsymbol{x}'\|^{2})=\sum_{\boldsymbol{n}\in (\mathbb{N}_{0})^{d}}\phi_{\boldsymbol{n}}(\boldsymbol{x})\phi_{\boldsymbol{n}}(\boldsymbol{x}'),\ \sigma>0,\ \forall \boldsymbol{x}, \boldsymbol{x}'\in X,
    $$  
where $\phi_{\boldsymbol{n}}(\boldsymbol{x})=\prod_{j=1}^{d}\left(\frac{2^{n_{j}}}{n_{j}!}\right)^{\frac{1}{2}}(\sigma x_{j})^{n_{j}}\exp^{-\sigma^{2}(x_{j})^{2}},\ \forall \boldsymbol{x}=(x_{1},x_{2},...,x_{d})^{T}\in X$.
Also, let $K_{2}$ be the min kernel, that is, for any $\boldsymbol{x}, \boldsymbol{x}'\in X$,
    $$
      K_{2}(\boldsymbol{x},\boldsymbol{x}')=\prod_{j=1}^{d} \left(\min \{x_{j},x_{j}'\}-x_{j}x_{j}'\right)=\sum_{\boldsymbol{n}\in \mathbb{N}^{d}} \phi_{\boldsymbol{n}}(\boldsymbol{x})\phi_{\boldsymbol{n}}(\boldsymbol{x}'), 
    $$
where $\phi_{\boldsymbol{n}}(\boldsymbol{x})=\prod_{j=1}^{d} \frac{\sqrt{2}}{n_{j}\pi}\sin(n_{j}\pi x_{j}),\ \forall \boldsymbol{x}=(x_{1},x_{2},...,x_{d})^{T}\in X$. For any $m\in \mathbb{N}$, we can construct $\mathcal{B}_{K_{1}}^{\frac{2m}{2m-1}}(X)$ and $\mathcal{B}_{K_{2}}^{\frac{2m}{2m-1}}(X)$.

    In each experiment, we select the training data, a loss function and a RKBS above and some suitable parameters, and use Algorithm \ref{alg:ADMM} for training. Since $\{s_{k}\}$ is globally convergent to a stationary point of Optimization (\ref{2.1}), we solve Optimization (\ref{2.1}) repeatedly by selecting some initial values randomly and choosing the minimizer of these outputs as the approximate solution $s_{D}^{\frac{2m}{2m-1}}:X\to \mathbb{R}$ to build the SVM
    $$
        \mathcal{R}s_{D}^{\frac{2m}{2m-1}}(\boldsymbol{x})=\begin{cases} +1, & s_{D}^{\frac{2m}{2m-1}}(\boldsymbol{x}) \geq0,\\
                                 -1, & s_{D}^{\frac{2m}{2m-1}}(\boldsymbol{x})<0.
                   \end{cases}   
    $$
to make the prediction on testing data. Recall that $\mathcal{A}_{K}^{2m}$ is the sum of infinite terms and is absolutely uniformly convergent, we truncate $\mathcal{A}_{K}^{2m}$ by $M$ terms and use $(\mathcal{A}_{K}^{2m})_{M}$ instead of $\mathcal{A}_{K}^{2m}$ to deal with the infinite sum and reduce the computation, that is, 
    $$
      \mathcal{A}_{K}^{2m}\approx (\mathcal{A}_{K}^{2m})_{M}=\sum\limits_{n\in \mathbb{N}_{M}}(\boldsymbol{\Phi}_{n})^{\otimes 2m}.
    $$
It is clear that $\lim\limits_{M\to \infty}(\mathcal{A}_{K}^{2m})_{M}=\mathcal{A}_{K}^{2m}$. If $M\geq N(N+1)/2$ and $(\mathcal{A}_{K}^{2m})_{M}$ satisfies Assumption \ref{Assumption:4.1} (iv), then we make sure the convergence of Algorithm \ref{alg:ADMM} with the loss functions and kernels above. Next, we discuss how to solve Optimization (\ref{S-1'}) by breaking them in $N$ Optimizations in $\mathbb{R}$. To illustrate above, we give a simple example. For the loss function $L_{1}$, by simple algebra, we have that if $y_{i}=+1$, then
    $$
       (\boldsymbol{\alpha}_{k+1})_{i}=
       \begin{cases}
           e_{i}+\frac{1}{\beta N} & e_{i}<1-\frac{1}{\beta N}, \\
           1 & 1-\frac{1}{\beta N}\leq e_{i}<1, \\
           e_{i} & e_{i}\geq 1,
       \end{cases}\ \ i=1,2,...,N,
    $$
where $e_{i}=(\mathcal{A}_{K}^{2m}(\boldsymbol{c}_{k})^{2m-1})_{i}-\frac{1}{\beta}(\boldsymbol{\gamma}_{k})_{i}$. On the other hand, if $y_{i}=-1$, then
    $$
       (\boldsymbol{\alpha}_{k+1})_{i}=
       \begin{cases}
           e_{i} & e_{i}<-1, \\
           -1 & -1\leq e_{i}<-1+\frac{1}{\beta N}, \\
           e_{i}-\frac{1}{\beta N} & e_{i}\geq -1+\frac{1}{\beta N}.
       \end{cases}
    $$
    
    Next we introduce our test results on synthetic data and real data. 

    \subsection{Examples on Synthetic Data}
    In this subsection, we introduce our test results on the synthetic data. First, we use a training set $D_{11}$ with 25 points and a testing set $D_{12}$ with 2601 points to show the effectiveness of Algorithm \ref{alg:ADMM}. 

    \vspace{-0.5cm}
    \begin{figure}[H]
	\centering
	\subfloat[Training Set $D_{11}$]{
		\includegraphics[width=1.8 in]{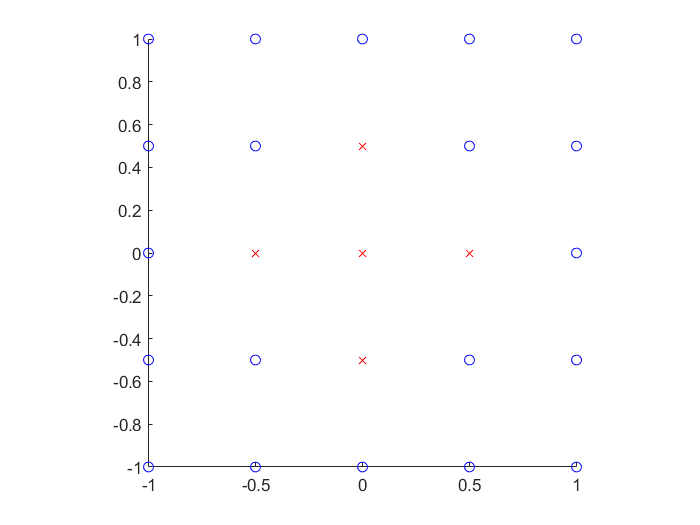}
		
    }
	\subfloat[Testing Set $D_{12}$]{
		\includegraphics[width=1.8 in]{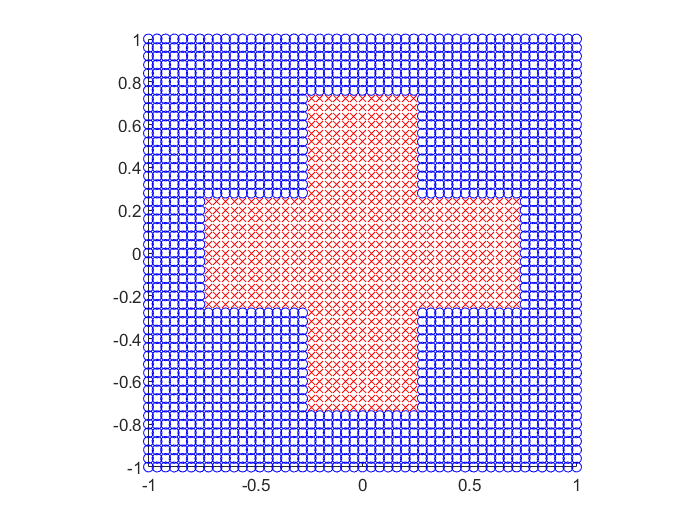}
		
	}
	\caption{The binary classification $X_{1}=[-1,1]\times [-1,1]$ and $Y=\{+1,-1\}$: The classes are coded as a binary variable (blue=$+1$ and red=$-1$). The left panel represents the training data $D_{11}$ and the right panel represents the testing data $D_{12}$.}
    \end{figure}
    \vspace{-0.5cm}

    First, we show the convergence of Algorithm \ref{alg:ADMM}. The selection of relevant parameters in this numerical experiments is given below which satisfies Assumption \ref{Assumption:4.1}.

     \begin{itemize}
      \setlength{\itemsep}{0pt}
      \setlength{\parskip}{0pt}
       \item Gaussian kernel $K_{1}$, where $\sigma=1$.
       \item The loss functions $L_{1}$, $L_{2}$, $L_{3}$ and $L_{4}$.
       \item The RKBS $\mathcal{B}_{K_{1}}^{2}(X_{1})$, $\mathcal{B}_{K_{1}}^{\frac{4}{3}}(X_{1})$ and $\mathcal{B}_{K_{1}}^{\frac{6}{5}}(X_{1})$.
       \item $N=25$, $\lambda=0.04$, $\beta=0.1$, $\mu=10^{-6}$ and $\varepsilon_{0}=10^{-12}$.
       \item we choose 20 initial values randomly in $[-1,1]^{N}$.
     \end{itemize}   

    By Theorem \ref{Theorem:4.2}, we can show the convergence of the sequence $\{s_{k}\}$ by 
    $$
       \sum\limits_{k\in \mathbb{N}}\|\Psi(s_{k+1})-\Psi(s_{k})\|_{\mathcal{B}_{K_{1}}^{2m}(X_{1})}=\sum_{k\in \mathbb{N}}\left(\sum_{n\in \mathbb{N}} \left|\boldsymbol{\Phi}_{n}^{T}(\boldsymbol{c}_{k+1}-\boldsymbol{c}_{k})\right|^{2m} \right)^{\frac{1}{2m}}. 
    $$
We use $(\mathcal{A}_{K}^{2m})_{M}$ instead of $\mathcal{A}_{K}^{2m}$ and we have that
    $$
      \sum_{k\in \mathbb{N}}\|\Psi(s_{k+1})-\Psi(s_{k})\|_{\mathcal{B}_{K_{1}}^{2m}(X_{1})}\approx \sum_{k\in \mathbb{N}}\left(\sum_{n\in \mathbb{N}_{M}} \left|\boldsymbol{\Phi}_{n}^{T}(\boldsymbol{c}_{k+1}-\boldsymbol{c}_{k})\right|^{2m} \right)^{\frac{1}{2m}}.
    $$

    We now conduct experiments to verify convergence of Algorithm \ref{alg:ADMM} with nonconvex loss function $L_{3}$. 

    \vspace{-0.5cm}
    \begin{figure}[H]
	\centering
	\subfloat[$m=1$]{
		\includegraphics[width=1.6 in]{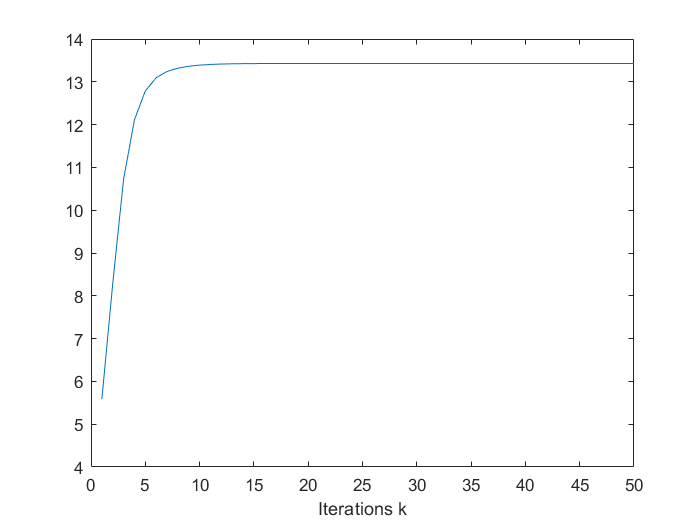}
		
    }
	\subfloat[$m=2$]{
		\includegraphics[width=1.6 in]{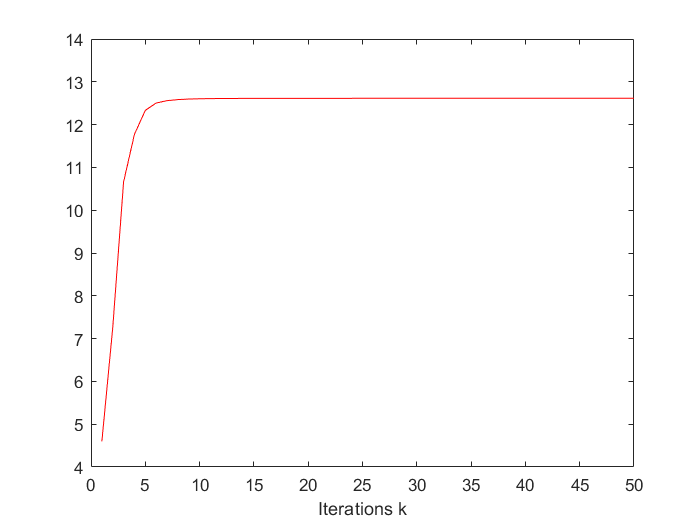}
		
	}
    \subfloat[$m=3$]{
		\includegraphics[width=1.6 in]{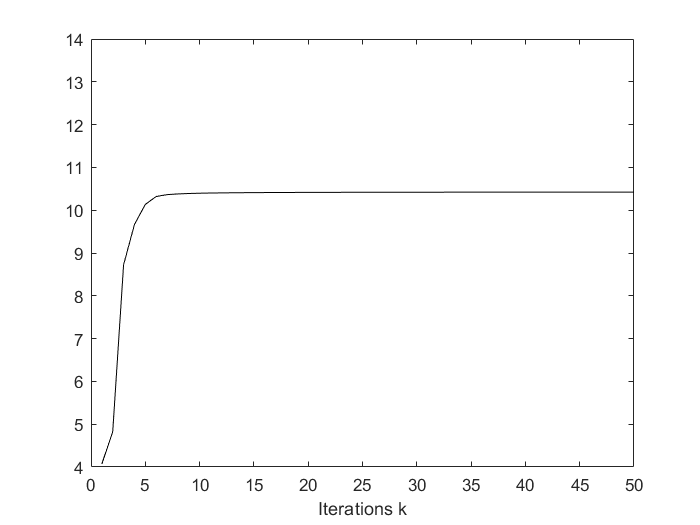}
		
	}
    \caption{The convergence of Algorithm \ref{alg:ADMM} in different RKBSs with the loss function $L_{3}$. The horizontal axis represents iterations and the vertical axis represents $\sum\limits_{k\in \mathbb{N}}\|\Psi(s_{k+1})-\Psi(s_{k})\|_{\mathcal{B}_{K_{1}}^{2m}(X_{1})}$. }
    \label{Figure:3}
    \end{figure}
    
     As shown in Figure \ref{Figure:3}, for the training data $D_{11}$, loss function $L_{3}$ and other parameters above, Algorithm \ref{alg:ADMM} converges in less than 50 iterations. These numerical results shows that Algorithm \ref{alg:ADMM} is efficient and stable. Next, we use other loss functions mentioned above to test Algorithm \ref{alg:ADMM} and reveal the advantage of the SVM in general RKBS with nonconvex loss function through comparing the performance in different RKBSs with different lower semi-continuous loss functions. Here are the results of these experiments:

    \begin{table}[H]
    \caption{Different Testing Accuracy on Testing Set $D_{12}$.}
    \centering
   {\begin{tabular}{c|c|c|c|c} \hline
         \diagbox[dir=NW]{RKBS}{Loss} & $L_{1}$ & $L_{2}$ & $L_{3}$ & $L_{4}$ \\ \hline
         $\mathcal{B}_{K_{1}}^{2}(X_{1})$ & 90.3\% & 92.0\% & 89.1\% & 91.4\% \\ \hline
         $\mathcal{B}_{K_{1}}^{\frac{4}{3}}(X_{1})$ & 90.3\% & 91.4\% & 91.2\% & 92.5\% \\ \hline
         $\mathcal{B}_{K_{1}}^{\frac{6}{5}}(X_{1})$ & 91.2\% & 91.4\% & 85.2\% & 93.2\% \\ \hline
    \end{tabular}}
    \label{table:1}
    \end{table}

    From Table \ref{table:1}, it shows that the SVM in RKBS with lower semi-continuous loss function by Algorithm \ref{alg:ADMM} is feasible in terms of accuracy. Moreover, it is easy to see that for this training data $D_{11}$ and testing data $D_{12}$, the SVM in $\mathcal{B}_{K_{1}}^{\frac{6}{5}}(X_{1})$ with the nonconvex loss function $L_{4}$ performs better than other case shown in Table \ref{table:1}. However, when $m$ is larger, the performance of corresponding SVM is not necessarily better, such as the performance of $L_{3}$ above. Next, we introduce the numerical experiment result on another dataset.

    We sample from $\Omega_{1}=[0.4,1]\times[0.4,1]$ labeled by $+1$ and $\Omega_{2}=[0,0.6]\times[0,0.6]$ labeled by $-1$ randomly to obtain training set $D_{21}$ and testing set $D_{22}$. The data labeled by $+1$ are equal to the data labeled by $-1$ in $D_{21}$ or $D_{22}$. Here is an example of sampling. In the following figures, two subdatasets are colored in blue and red.  

    \vspace{-0.75cm}
    \begin{figure}[H]
	\centering
	\subfloat[Training Set $D_{21}$]{
		\includegraphics[width=1.8 in]{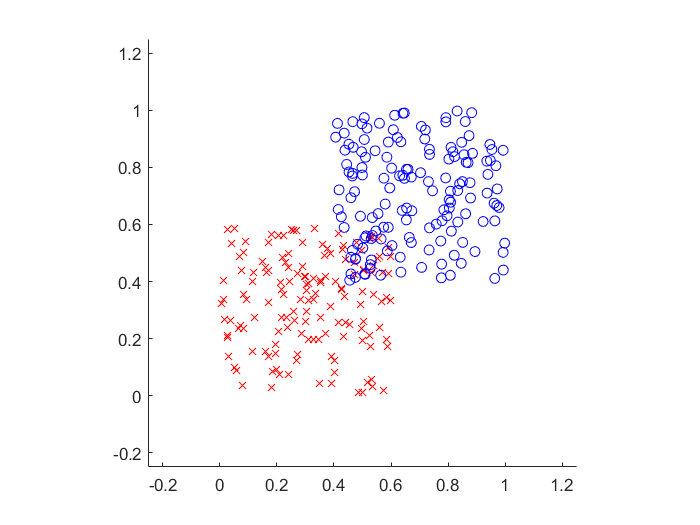}
		
    }
	\subfloat[Testing Set $D_{22}$]{
		\includegraphics[width=1.8 in]{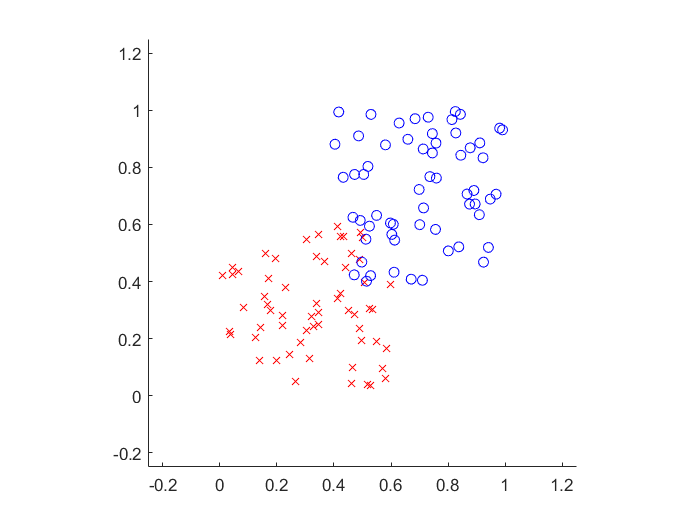}
		
	}
	\caption{An example of sampling where $X_{2}=[0,1]\times [0,1]$ and $Y=\{+1,-1\}$. The classes are coded as a binary variable (blue=$+1$ and red=$-1$). The left panel represents the training data $D_{21}$ and the right panel represents the testing data $D_{22}$.}
    \end{figure}
    \vspace{-0.5cm}

    Next, we use a training set $D_{21}$ with 300 points and a testing set $D_{22}$ with 120 points to show the effectiveness of Algorithm \ref{alg:ADMM}. Here are some parameters and results of these experiments:

      \begin{itemize}
      \setlength{\itemsep}{0pt}
      \setlength{\parskip}{0pt}
       \item Min kernel $K_{2}$.
       \item The loss functions $L_{1}$, $L_{2}$, $L_{3}$ and $L_{4}$.
       \item The RKBS $\mathcal{B}_{K_{2}}^{2}(X_{2})$, $\mathcal{B}_{K_{2}}^{\frac{4}{3}}(X_{2})$ and $\mathcal{B}_{K_{2}}^{\frac{6}{5}}(X_{2})$.
       \item $N=300$, $\lambda=0.01$, $\beta=1$ and $\varepsilon_{0}=10^{-12}$.
       \item Choose $20$ initial values randomly in $[0,1]^{N}$.
      \end{itemize}  

    \begin{table}[H]
    \caption{Different Testing Accuracy on Testing Set $D_{22}$.}
    \centering
   {\begin{tabular}{c|c|c|c|c} \hline
         \diagbox[dir=NW]{RKBS}{Loss} & $L_{1}$ & $L_{2}$ & $L_{3}$ & $L_{4}$ \\ \hline
         $\mathcal{B}_{K_{2}}^{2}(X_{2})$ & 90.0\% & 90.0\% & 90.0\% & 89.2\% \\ \hline
         $\mathcal{B}_{K_{2}}^{\frac{4}{3}}(X_{2})$ & 89.2\% & 90.0\% & 89.2\% & 90.8\% \\ \hline
         $\mathcal{B}_{K_{2}}^{\frac{6}{5}}(X_{2})$ & 90.0\% & 90.0\% & 89.2\% & 90.0\% \\ \hline
    \end{tabular}}
    \label{table:2}
    \end{table}

    From Table \ref{table:2}, we check that the SVM in $\mathcal{B}_{K_{2}}^{\frac{4}{3}}(X_{2})$ with nonconvex loss function $L_{4}$  performs better than others. It shows that in some cases the SVM in RKBS is more suitable than the classical SVM in RKHS. Next we introduce our experiments on real data.

     \subsection{Examples on UCI Machine Learning Repository}
    We choose the banknote authentication dataset in UCI Machine Learning Repository to test Algorithm \ref{alg:ADMM}. The data were extracted from images that were taken from genuine and forged banknote-like specimens. Wavelet Transform tool were used to extract features from images. There are 4 input variables about them, which are variance, skewness, curtosis of wavelet transformed image and entropy of image. Using principal component analysis, we select $3$ main input variables (variance, skewness, curtosis of wavelet transformed image) and denote the sample space $X_{3}=[-20,20]^{3}$ and the label space $Y=\{+1,-1\}$. Next, we choose $200$ images randomly as training set $D_{31}$, and a half of them are labeled by $+1$ and the others are labeled by $-1$. Also, we choose $100$ images randomly as testing set $D_{32}$, and a half of them are labeled by $+1$ and the others are labeled by $-1$. We introduce some parameters of these experiments:
    \begin{itemize}
    \setlength{\itemsep}{0pt}
    \setlength{\parskip}{0pt}
      \item The kernels $K_{1}$, where $\sigma=0.5$.
      \item The loss functions $L_{1}$, $L_{2}$, $L_{3}$ and $L_{4}$.
      \item The RKBS $\mathcal{B}_{K_{1}}^{2}(X_{3})$, $\mathcal{B}_{K_{1}}^{\frac{4}{3}}(X_{3})$ and $\mathcal{B}_{K_{1}}^{\frac{6}{5}}(X_{3})$.
      \item $N=200$, $\lambda=0.01$, $\beta=0.01$ and $\varepsilon_{0}=10^{-12}$.
      \item Choose $20$ initial values randomly in $[-1,1]^{N}$.
    \end{itemize}

    In each experiment, we will choose a loss function and an RKBS. Then we have the following results.

    \begin{table}[H]
    \caption{Different Testing Accuracy of on Testing Set $D_{32}$.}
    \centering
   {\begin{tabular}{c|c|c|c|c} \hline
         \diagbox[dir=NW]{RKBS}{Loss} & $L_{1}$ & $L_{2}$ & $L_{3}$ & $L_{4}$ \\ \hline
         $\mathcal{B}_{K_{1}}^{2}(X_{3})$ & 87.0\% & 85.0\% & 86.0\% & 79.0\% \\ \hline
         $\mathcal{B}_{K_{1}}^{\frac{4}{3}}(X_{3})$ & 86.0\% & 82.0\% & 88.0\% & 86.0\% \\ \hline
         $\mathcal{B}_{K_{1}}^{\frac{4}{3}}(X_{3})$ & 86.0\% & 82.0\% & 86.0\% & 86.0\% \\ \hline
    \end{tabular}}
    \label{table:3}
    \end{table}

    From Table \ref{table:3}, we check that the SVM in $\mathcal{B}_{K_{1}}^{\frac{4}{3}}(X_{3})$ with nonconvex loss function $L_{3}$ performs better than others in these experiments. It shows that in some cases the nonconvex loss function and RKBS are more suitable than the convex loss function and RKHS, which is our motivation of this paper.

    In Section \ref{sec:5}, we demonstrate the effectiveness of solving the SVM in RKBS with lower semi-continuous loss function by splitting method based on ADMM. In addition, we give some examples to show that in some cases, the SVM in RKBS with lower semi-continuous loss function is better than the SVM in RKHS with convex loss function. Therefore, we can consider not only RKHS and convex loss function, but also RKBS and nonconvex loss function.

    \section{Final Remarks} 
    \label{sec:6} 
    In the paper \cite{Lin2022}, the second author and the third author propose several numerical tricks in RKBS and discuss the homotopy method for the multikernel-based approximation method. As a continuation of the program, in this paper, we discuss the splitting method based on ADMM for the SVM in $\mathcal{B}_{K}^{\frac{2m}{2m-1}}(X)$ with lower semi-continuous loss function. Since $\mathcal{B}_{K}^{p}(X)\ (1\leq p<\infty)$ are also RKBSs, from \cite[Chapter 5]{Xu2019} and \cite[Corollary 4.1]{Huang2019}, the SVM in $\mathcal{B}_{K}^{p}(X)\ (1\leq p<\infty)$ have a minimizer. Since $\mathcal{B}_{K}^{p}(X)\ (p>1)$ has similar property with $\mathcal{B}_{K}^{\frac{2m}{2m-1}}(X)$, we try to use similar line of arguments therein to deal with. Although $\mathcal{B}_{K}^{1}(X)$ is not reflexive, strictly convex or smooth, we can use the SVM in $\mathcal{B}_{K}^{p}(X)\ (1<p<\infty)$ to approximate the SVM in $\mathcal{B}_{K}^{1}(X)$. Next we will study how to solve the SVM in $\mathcal{B}_{K}^{p}(X)\ (1\leq p<\infty)$ with lower semi-continuous loss function.

\hbox to14cm{\hrulefill}\par
   \  \\

   {\small Mingyu Mo} \par
   School of Mathematical Sciences \par
   South China Normal University \par
   Guangzhou, 510631, Guangdong, PR China \par
   Email-address: mmymaths@qq.com. \\
   \ \\

   {\small Yimin Wei} \par
   School of Mathematical Sciences \par
   Fudan University \par
   Shanghai, PR China \par
   Email-address: ymwei@fudan.edu.cn. \\
   \ \\

   {\small Qi Ye} \par
   School of Mathematical Sciences \par
   South China Normal University \par
   Guangzhou, 510631, Guangdong, PR China \\

   Pazhou Lab \par
   Guangzhou, 511442, Guangdong, PR China.\par
   Email-address: yeqi@m.scnu.edu.cn. \\

\end{document}